\newtheorem{thm}{Theorem}[section]
\newtheorem{pro}[thm]{Proposition}
\theoremstyle{definition}
\newtheorem{defi}[thm]{Definition}
\newcommand {\emptycomment}[1]{} %to remove paragraphs
\newcommand{\delete}[1]{}
\title[Non-abelian extensions and Wells exact sequences of Bol algebras ]
{ Non-abelian extensions and Wells exact sequences of Bol algebras }
\author{Jingzi Zhang}
\address{School of Mathematics and Statistics,\\
Henan Normal University, Xinxiang 453007, P. R. China;\\
 E-mail address:\texttt{{  zhangjingzi@163.com}}}
\author{Tao Zhang}
\address{School of Mathematics and Statistics,\\
Henan Normal University, Xinxiang 453007, P. R. China;\\
 E-mail address:\texttt{{  zhangtao@htu.edu.cn}}}
\keywords{Bol algebra, non-abelian extension, automorphism, extensibility, Wells exact sequence}
\begin{document}
\begin{abstract}
The aim of this paper is to explore non-abelian extensions of Bol algebras and to study the extensibility of a pair of automorphisms within these non-abelian extensions. We begin by researching  non-abelian extensions of Bol algebras and categorizing them based on non-abelian cohomology groups. Additionally, we discuss the conditions under which a pair of automorphisms can be extended in the context of non-abelian extensions of Bol algebras and derive the corresponding Wells exact sequences.
\end{abstract}

\maketitle

\tableofcontents

\vspace{-1.2cm}

\allowdisplaybreaks

\section{Introduction}

In \cite{Sab1, Mik2}, Bol algebras were initially introduced in the study of the differential geometry of smooth Bol loops. By generalizing the study of the differential geometry of Lie groups, it was found that the tangent structure of a smooth Bol loop is a Bol algebra, and a correspondence between Bol algebras and local smooth Bol loops was discovered locally. More importantly, it was shown in \cite{Mik2} that Bol algebras are Akivis algebras with additional conditions. Then, together with Akivis algebras and Lie-Yamaguti algebras, Bol algebras form an important category within the field of binary-ternary algebras. Among them, the theory of Lie-Yamaguti algebras in the same category as Bol algebras has been studied and explored by many scholars, with detailed research available in \cite{Ya, Ya2, Zhang, Zhao1, ShengZhao}. Initially, general Lie triple systems were referred to as Lie-Yamaguti algebras in \cite{Ya}. Subsequently, Yamaguti investigated the cohomology groups of general Lie triple systems in \cite{Ya2}. Building on this foundation, Zhang and Li  conducted corresponding research on the deformations and extensions of Lie-Yamaguti algebras in \cite{Zhang}. And various aspects of the general theory of Bol algebras have been explored in \cite{Kuz, Hent, Per, Mik1}. For a broader context, one can also refer to \cite{Sab2, Mik3}.

The extension problem is an important issue in group theory and algebras. The concept of non-abelian extensions was first introduced  by Eilenberg and Maclane \cite{Eil}, focusing on the non-abelian extension theory of abstract groups. With the advancement of algebraic geometry and mathematical physics, the theory has been further explored in the context of non-abelian extensions of various types of algebras, including Lie algebras \cite{Fr,Inas},  Lie-Yamaguti algebras \cite{Sun}, Rota-Baxter Lei algebras \cite{Mis}, Rota-Baxter Leibniz algebras \cite{Guo} and associative conformal algebras \cite{Hou}. However, there are relatively few research results on the extension problem for Bol algebras at present. Recently, The abelian extensions of Bol algebras have been examined in \cite{Issa}, but the non-abelian extensions of Bol algebras remain relatively unexplored. We will fill this gap in the present work.

On the basis of the extension theory, the problem of the inducibility of automorphisms is also interesting and important. Under what conditions can a pair of automorphisms be inducible? This question was first posed by Wells \cite{Wells} in the context of abstract groups and was further explored in \cite{Pas,Jin}. In recent years, significant progress has been made in the study of automorphism groups and Wells exact sequences in the context of algebraic extensions. Many scholars have conducted corresponding research on the automorphism groups and Wells exact sequences for various types of algebraic extensions. Das and Ratheeb examined the inducibility of automorphisms in  Rota-Baxter group extensions \cite{Das2}. Du and Tan constructed the Wells exact sequence for the abelian extensions of Lie coalgebras \cite{Du}. Sun, Li, Goswamia, Mishraa and Mukherjee analyzed the inducibility of automorphisms in the extensions of Lie-Yamaguti  algebras \cite{Sun,Gos}. Hazra and Habib investigated the Wells exact sequences in the extensions of Lie superalgebras \cite{Haz}. Inspired by these results, we explore the extendability of a pair of automorphisms on non-abelian extensions of Bol algebras. Furthermore,we provide the necessary and sufficient conditions for a pair of automorphisms to be extensible and derive the analogous Wells short exact sequences in the context of non-abelian extensions of Bol algebras.

This paper is organized as follows. We begin by reviewing the definition of Bol algebras, their representations, and delve into some basic properties of their cohomology groups. Next, we provide an in-depth analysis of non-abelian extensions and utilizes non-abelian cohomology groups for their classification. Then, we tackle the problem of extending pairs of automorphisms within the context of non-abelian extensions of Bol algebras. Finally, we focus on deriving the Wells short exact sequences for non-abelian extensions of Bol algebras.

Throughout this paper, all vector spaces are assumed to be over an algebraically closed field of characteristic different from 2 and 3.
The space of linear maps from two vector spaces $V$ to $W$ is denoted by $\mathrm{Hom} (V,W)$ and the space  of linear maps from $V$ to  itself is denoted by $\mathfrak{gl}(V)$.

%%%%%%%%%%%%%%%%%%%%%%%%%%%%%%%%%%%%%%%%%%%%%%%%%%%%%%%%%%%%%%%%%%%%%%%%%%%%%%%%%%%%%%%%%%%%%%%%%%%%%%%

\section{Preliminaries on Bol algebras}

We review the concepts of Bol algebras, their representations, and the associated low dimensional cohomology theory. For the details see \cite{Issa}.

\begin{defi} A Bol algebra is a vector space $B$ with a bilinear map $*: B \otimes  B\longrightarrow  B$ and a trilinear map
$[ \ , \ , \ ]: B\otimes  B\otimes  B\longrightarrow  B$,
satisfying
\begin{equation}\label{eq2.1}x_1*x_2+x_2*x_1=0,~~[x_1,x_2,x_3]+[x_2,x_1,x_3]=0,\end{equation}
\begin{equation}\label{eq2.3}[x_1,x_2,x_3]+
[x_2,x_3,x_1]+[x_3,x_1,x_2]=0,\end{equation}
\begin{equation}\label{eq2.4}[x_1,x_2,y_1*y_2]=[x_1,x_2,y_1]*y_2+y_1*[x_1,x_2,y_2]+[y_1,y_2,x_1*x_2] - y_1y_2*x_1x_2,
\end{equation}
\begin{equation}\label{eq2.5}[x_1,x_2,[y_1,y_2,y_3]]=[[x_1,x_2,y_1],y_2,y_3]+[y_1,[x_1,x_2,y_2],y_3]+[y_1,y_2,[x_1,x_2,y_3]],\end{equation}
for all $x_1,x_2,x_3,y_1,y_2,y_3\in  B$.
 Denote it by $( B,*,[ \  , \ , \ ])$ or simply by $ B $  and $y_1y_2*x_1x_2$ means $(y_1*y_2)*(x_1*x_2)$ .
\end{defi}

\begin{defi}(\cite{Issa}) A representation or a modue of a Bol algebra $ B$ consists of a vector
space $V$ together with a linear map $\mu: B\longrightarrow\mathfrak{gl}(V)$
and bilinear maps $\theta,D: B\wedge  B\longrightarrow\mathfrak{gl}(V)$
satisfying
\begin{equation}\label{eq2.9}D(x_1 , x_2) + \theta (x_1 , x_2) - \theta (x_2 , x_1) = 0,\end{equation}
\begin{equation}\label{eq2.10}[D (x_1 , x_2), \mu (y_1)] = \mu ([x_1, x_2, y_1]) - \theta (y_1, x_1 * x_2)
+ \mu (x_1 * x_2)\mu (y_1),\end{equation}
\begin{equation}\label{eq2.11}\theta (x_1, y_1 * y_2) = \mu (y_1) \theta (x_1, y_2) - \mu (y_2) \theta (x_1, y_1)
- {\big (} D(y_1 , y_2) -  \mu (y_1 * y_2) {\big )} \mu (x_1),\end{equation}
\begin{equation}\label{eq2.12}[D (x_1 , x_2), D (y_1 , y_2)] = D ([x_1 , x_2, y_1], y_2) + D (y_1, [x_1 , x_2, y_2]),\end{equation}
\begin{equation}\label{eq2.13}[D (x_1 , x_2), \theta (y_1, y_2)] = \theta ([x_1 , x_2, y_1], y_2)
+ \theta (y_1, [x_1 , x_2, y_2]),\end{equation}
\begin{equation}\label{eq2.7}\theta (x_1 , [y_1, y_2, y_3]) = \theta (y_2, y_3) \theta (x_1, y_1)
- \theta (y_1, y_3) \theta (x_1, y_2) + D (y_1, y_2)\theta (x_1, y_3),\end{equation}
for all $x_i, y_i\in  B$. Denote the representation of $ B$ by
$(V,\mu,\theta,D)$ or simply by $V$.\end{defi}

\begin{pro}(\cite{Issa})  Let $( B,*_{ B},[ \  , \ , \ ]_{ B})$ be
a Bol algebra and $V$ be a vector space. Assume that $\mu: B\longrightarrow\mathfrak{gl}(V)$
is a linear map and  $\theta,D: B\wedge  B\longrightarrow\mathfrak{gl}(V)$ are bilinear maps.
Then $(V,\mu,\theta,D)$ is a representation of $ B$ if and only
if $( B\oplus V,*,[  \  , \ , \ ])$ is a Bol algebra, where
\begin{equation*}[x+u,y+v,z+w]=[x,y,z]_{ B}+\theta(y,z)u-\theta(x,z)v+D(x,y)w,\end{equation*}
and
\begin{equation*}(x+u)*(y+v)=x*_{ B}y+\mu(x)v-\mu(y)u,\end{equation*}
for all $x,y,z\in B,u,v,w\in V$.
The Bol algebra $( B\oplus V,*,[  \  , \ , \ ])$ is called
the semidirect product Bol algebra. Denote it simply by $ B\ltimes V$.
\end{pro}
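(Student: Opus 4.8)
The plan is to prove the equivalence by a direct but carefully organized verification: we must show that the bilinear operation $*$ and trilinear bracket $[\,,\,]$ defined on $B \oplus V$ satisfy the five Bol algebra axioms \eqref{eq2.1}--\eqref{eq2.5} if and only if the six representation axioms \eqref{eq2.9}--\eqref{eq2.7} hold for $(V,\mu,\theta,D)$. Throughout, I would write a general element of $B \oplus V$ as $x+u$ with $x \in B$, $u \in V$, and expand each axiom, collecting the "$B$-component" (which is automatically satisfied since $B$ is a Bol algebra) and the "$V$-component" (which yields the representation conditions). The key bookkeeping device is that the $V$-valued terms in each Bol identity are multilinear in the $V$-arguments, so by linearity it suffices to check the identity on elements where exactly one argument lies in $V$ and the rest in $B$ — this is what produces identities among $\mu, \theta, D$ rather than higher-order obstructions.

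First I would dispatch the easy axioms. Antisymmetry of $*$ on $B\oplus V$ reduces to $\mu(x)v - \mu(y)u = -(\mu(y)u - \mu(x)v)$... wait, more precisely skew-symmetry of $*_B$ plus the defining formula gives antisymmetry on the nose; similarly the skew-symmetry $[x_1,x_2,x_3]+[x_2,x_1,x_3]=0$ applied to arguments in $B\oplus V$ forces, from the $V$-slots, the relation $D(x_1,x_2) + D(x_2,x_1) = 0$ together with $\theta(x_2,x_3) - (\text{term})$; and the Jacobi-type identity \eqref{eq2.3} applied with one argument in $V$ produces exactly \eqref{eq2.9} (the relation $D(x_1,x_2) + \theta(x_1,x_2) - \theta(x_2,x_1)=0$) after comparing coefficients of $u$, $v$, $w$. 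So axioms \eqref{eq2.1}--\eqref{eq2.3} on $B\oplus V$ are equivalent to \eqref{eq2.9} (plus the built-in skew-symmetries of $\theta$ on $B\wedge B$, which are part of the data). Then \eqref{eq2.4} is the genuinely mixed binary-ternary identity: expanding $[x_1,x_2,y_1*y_2]$ on $B\oplus V$ and matching the coefficient of each $V$-slot will yield \eqref{eq2.10} (coefficient in the $x_i$ slots, using the $\mu(x_1*x_2)\mu(y_1)$ cross term) and \eqref{eq2.11} (coefficient in the $y_i$ slots). Finally \eqref{eq2.5}, the ternary derivation identity, splits according to which of the six arguments carries the $V$-entry: putting the $V$-entry in the first pair gives \eqref{eq2.12}, in a middle slot gives \eqref{eq2.13}, and in the last slot gives \eqref{eq2.7}.

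The main obstacle — really the only place requiring care — is the simultaneous extraction of \eqref{eq2.10} and \eqref{eq2.11} from \eqref{eq2.4}, because that axiom is not skew or cyclic, so its expansion on $B \oplus V$ generates several families of $V$-terms that must be separated correctly, and one has to use the already-derived relation \eqref{eq2.9} to rewrite some $\theta$'s as combinations of $D$ and $\theta$ in order to land on exactly the stated forms. One should also be mindful of the term $y_1 y_2 * x_1 x_2 = (y_1*y_2)*(x_1*x_2)$, whose $V$-component involves $\mu(y_1*y_2)$ and $\mu(x_1*x_2)$ acting on $V$-entries — these feed precisely the $\mu(x_1*x_2)\mu(y_1)$ and $-(D(y_1,y_2) - \mu(y_1*y_2))\mu(x_1)$ correction terms in \eqref{eq2.10} and \eqref{eq2.11}. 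Once the correspondence between slots and identities is fixed, each verification is a routine (if lengthy) linear expansion, and the "only if" direction is obtained by reading the same computations backwards: assuming $B\oplus V$ is a Bol algebra, restrict each Bol axiom to the appropriate arguments with a single $V$-entry to recover each representation axiom in turn.
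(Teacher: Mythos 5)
The paper itself gives no proof of this proposition --- it is quoted from \cite{Issa} --- so the only thing to compare against is the standard argument, which is exactly what you outline: split each Bol axiom on $B\oplus V$ into its $B$-component (automatic, since $B$ is a Bol algebra) and its $V$-component, and observe that the $V$-part of every composite expression is linear in each $V$-entry with no cross terms, so it suffices to test each axiom with a single argument taken from $V$. Your correspondence between axioms and representation identities is sound, up to one harmless slip: in the mixed identity \eqref{eq2.4} the roles are reversed --- placing the $V$-entry in a $y$-slot produces \eqref{eq2.10} (this is where the $\mu(x_1*x_2)\mu(y_1)$ cross term arises), while placing it in an $x$-slot produces \eqref{eq2.11}; moreover neither extraction actually needs \eqref{eq2.9}, which comes separately from the cyclic identity \eqref{eq2.3}, the skew-symmetries being built into the data $\theta,D\colon B\wedge B\to\mathfrak{gl}(V)$ as you note. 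With that relabelling, the remainder of the sketch (the three placements of the $V$-entry in \eqref{eq2.5} yielding \eqref{eq2.12}, \eqref{eq2.13} and \eqref{eq2.7}, and reading the same expansions backwards for the converse) is correct and is what a written-out proof would contain.
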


Because of applications in a study of deformations and abelian extensions of Bol algebras in \cite{Issa}, we
restrict ourself to a construction of $(2,3)$-cohomology groups for this class of algebras. Let $C^2 (B,V)$ be the space of bilinear maps $\nu : B \times B \rightarrow V$ such that  $\nu (x_1, x_2) = - \nu (x_2, x_1)$ and $C^3 (B,V)$ be the space of trilinear maps
$\omega : B \times B \times B \rightarrow V$ such that $\omega (x_1, x_2, x_3) = - \omega (x_2, x_1, x_3)$ for all $x_i \in B$.
\begin{defi} Let $B$ be a Bol algebra and $(\mu, D, \theta)$ be a representation of $ B$. The pair $(\nu, \omega) \in C^2 (B,V) \times C^3 (B,V)$ is called a
$(2,3)$-cocycle , for all $x_i, y_i \in B$, we have:
\begin{equation} \omega (x_1, x_2, x_3) + \omega (x_2, x_3, x_1)+ \omega (x_3, x_1, x_2)= 0,\end{equation}
\begin{align} &\omega (x_1, x_2, y_1 * y_2) + D(x_1, x_2) \nu (y_1, y_2)\\
&= \omega (y_1, y_2, x_1 * x_2) + D(y_1, y_2) \nu (x_1, x_2)+ \nu ([x_1, x_2, y_1], y_2) \nonumber\\
&+ \nu (y_1, [x_1, x_2, y_2])+ \mu (y_1) \omega (x_1, x_2, y_2) - \mu (y_2) \omega (x_1, x_2, y_1)\nonumber\\
&+ \nu (x_1 * x_2) \nu (y_1, y_2) - \mu (y_1 * y_2) \nu (x_1, x_2)- \nu (y_1 * y_2, x_1 * x_2),\nonumber\end{align}
\begin{align}  &\omega (x_1, x_2, [y_1, y_2, y_3]) + D(x_1, x_2) \omega (y_1, y_2, y_3)\\
&= \omega ([x_1, x_2, y_1], y_2, y_3) + \omega (y_1, [x_1, x_2, y_2], y_3)+ \omega (y_1, y_2, [x_1, x_2, y_3])\nonumber\\
&+ D(y_1, y_2) \omega (x_1, x_2, y_3) + \theta (y_2, y_3) \omega (x_1, x_2, y_1)- \theta (y_1, y_3) \omega (x_1, x_2, y_2).\nonumber \end{align}
\end{defi}
The vector space generated by all (2,3)-cocycles is represented as $Z^2(B,V)\times Z^3(B,V)$.

\begin{defi} The $f$ be a linear mapping of $B$ into a representation space $V$. Then $f$ is called a pseudoderivation
with companion $\chi \in V$ of $B$ into $V$ with respect to the representation  $(\mu, D, \theta)$, for all $x_i \in B$,
\begin{align}f(x_1 * x_2) = \mu (x_1) f(x_2) - \mu (x_2) f(x_1) +  {\Delta}_{D, \mu} (x_1 , x_2)(\chi),
\end{align}
\begin{align}
f([x_1, x_2, x_3]) = \theta (x_2, x_3) f(x_1) - \theta (x_1, x_3) f(x_2)
+ D(x_1, x_2) f(x_3).
\end{align}
\end{defi}

\begin{defi}Let $B$ be a Bol algebra, and $(\mu, D, \theta)$ be a representation of $B$.
Then the pair $(\nu,\omega)$ where $\nu:B\otimes B\longrightarrow V$, $\omega:B \otimes B \otimes B \longrightarrow  V$ is called a
$(2,3)$-coboundary if there exists a pair $(f, \chi)$, where $f:B \rightarrow V$ is a linear
map and $\chi \in V$ such that
\begin{align}
 \nu (x_1, x_2) = \mu (x_1) f(x_2) - \mu (x_2) f(x_1)
+  (D(x_1,x_2)-\mu(x_1 * x_2))(\chi) - f(x_1 * x_2),
\end{align}
\begin{align}
  \omega (x_1, x_2, x_3) = \theta (x_2, x_3) f(x_1) - \theta (x_1, x_3) f(x_2)
+ D(x_1, x_2) f(x_3) - f([x_1, x_2, x_3]),
\end{align}
for all $x_i \in B$. The space of $(2,3)$-coboundaries is denoted by $B^2 (B,V) \times B^3 (B,V)$.
\end{defi}

It is proved in \cite{Issa} that the space of $(2,3)$-coboundaries is contained in space of $(2,3)$-cocycles.
Thus we have

\begin{defi}(\cite{Issa}) The $(2,3)$-cohomology group of a Bol algebra $B$ with coefficients
in the $B$-module $V$ is the quotient space
$$H^{(2,3)} (B,V):= (Z^2 (B,V) \times Z^3 (B,V)) / (B^2 (B,V) \times B^3 (B,V)).$$ \end{defi}
%%%%%%%%%%%%%%%%%%%%%%%%%%%%%%%%%%%%%%%%%%%%%%%%%%%%%%%%%%%%%%%%%%%%%%%%%%%%%%%%%%%%%%%%%%%%%%%%%%%%%%%
\section{Non-abelian extensions of Bol algebras}

In this section, we delve into the study of non-abelian extensions and non-abelian (2,3)-cocycles within the context of Bol algebras.

\begin{defi}
Let $B$ and $V$ be two Bol algebras, a non-abelian extension of $B$ by $V$ is described by a Bol algebra \(\hat{B}\), which fits into the following short exact sequence of Bol algebras:
$$\mathcal{E}:0\longrightarrow V\stackrel{i}{\longrightarrow} \hat B\stackrel{p}{\longrightarrow} B\longrightarrow0.$$
For ease of notation, we represent this extension as  \(\mathcal{E}\).
A section of \(p\) is a linear map \(s:B\to\hat{B}\) such that \(ps=\text{id}_B\).
\end{defi}

\begin{defi}
Consider two non-abelian extensions of $B$ by $V$, labeled as \(\hat{B}_1\) and \(\hat{B}_2\). These extensions are deemed equivalent if a homomorphism \(f:\hat{B}_1\to\hat{B}_2\) exists, which ensures the commutativity of the following diagram:
 \begin{equation}\label{Ene1} \xymatrix{
  0 \ar[r] & V\ar@{=}[d] \ar[r]^{i_1} & \hat B_1\ar[d]_{f} \ar[r]^{p_1} & B \ar@{=}[d] \ar[r] & 0\\
 0 \ar[r] & V \ar[r]^{i_2} & \hat B_2 \ar[r]^{p_2} & B  \ar[r] & 0
 .}\end{equation}
\end{defi}
Denote by $\mathcal{E}_{nab}(B,V)$ the set of all non-abelian extensions of $B$ by $V$.

Subsequently, we present the notion of a non-abelian cohomology group and illustrate how the categorization of non-abelian extensions is governed by these cohomology groups.

\begin{defi} Let $B$ and $V$ be two Bol algebras.
A non-abelian (2,3)-cocycle on $B$ with values in  $V$ is a septuple $(\nu,\omega,\mu,\theta,D)$
of maps such that $\omega:B \otimes B \otimes B \longrightarrow  V$ is trilinear, $\nu:B\otimes B\longrightarrow V,
~\theta,D:B\wedge B\longrightarrow \mathfrak{gl}(V)$ are bilinear
and $\mu:B\longrightarrow \mathfrak{gl}(V)$ is linear,
and the following these identities are satisfied for all
$x_i,y_i, x, y, z\in B~ (i=1,2,3), a,b,c\in V$,
\begin{equation}\label{B00}\nu(x,y)+\nu(y,x)
=0,~~\omega(x,y,z)+\omega(y,x,z)=0,
\end{equation}
\begin{align}
\label{B12}\omega(x,y,z)+\omega(y,z,x)+\omega(z,x,y)=0,
\end{align}
\begin{equation}
\label{B01}D(x,y)a+D(y,x)a=0
,\end{equation}
\begin{equation}\label{B13}D(x,y)a-\theta(y,x)a+\theta(x,y)a
=0,\end{equation}
\begin{align}\label{B22}&D(x,y)\nu(z,w)+\omega(x,y,z*_B w)\\
=&\nu([x,y,z]_B,w)-\mu(w)\omega(x,y,z)+\mu(z)\omega(x,y,w)+\nu(z,[x,y,w]_B)\nonumber\\
&+\omega(z,w,x*_B y)+D(z,w)\nu(x,y)-\nu(z*_B w,x*_B y)\nonumber\\
&-\mu(x*_B y)\nu(x,y)+\mu(x*_B y)\nu(z,w),\nonumber
\end{align}
\begin{align}\label{B23}
&D(x,y)\mu(z)a+\theta(z,x*_B y)a\\
=&\mu([x,y,z]_B)a+\omega(x,y,z)*_V a+\mu(z)D(x,y)a+\mu(x*_B y)\mu(z)a,\nonumber
\end{align}
\begin{align}
\label{B24}&D(x,y)(a*_V b)+\mu(x*_B y)(a*_V b)\\
=&(D(x,y)a)*_V b+a*_V (D(x,y)b)+[a,b,\nu(x,y)]\nonumber\end{align}
\begin{align}
\label{B25}[a,b,\nu(x,y)]_V+D(x,y)(a*_ V b)+\mu(x*_B y)(a*_ V b)=0,
\end{align}
\begin{align}\label{B31}\theta(x,[y,z,w]_B)a
 =\theta(z,w)\theta(x,y)a-\theta(y,w)\theta(x,z)a+D(y,z)\theta(x,w)a,
 \end{align}
\begin{align}\label{B32}\theta(x,y*_B z)a
=\mu(y)\theta(x,z)a-\mu(z)\theta(x,y)a-D(y,z)\mu(u)a+\mu(y*_B z)\mu(x)a,
\end{align}
\begin{align}
\label{B2}D(x,y)\theta(z,w)a-\theta(z,w)D(x,y)a=\theta([x,y,z]_B,w)a+\theta(z,[x,y,w]_B)a,
 \end{align}
 \begin{align}
 \label{B3}D(x,y)D(z,w)a-D(z,w)D(x,y)a=D([x,y,z]_B,w)a+D(z,[x,y,w ]_B)a,
\end{align}
\begin{equation}
\label{B4}D(x,y)([a,b,c]_V)=[D(x,y)a,b,c]_V+[a,D(x,y)b,c]_V+[a,b,D(x,y)c]_V.
 \end{equation}
 \begin{align}\label{B1}&D(x_1,x_2)\omega(y_1,y_2,y_3)+\omega(x_1,x_2,[y_1,y_2,y_3]_B)\\
=&\omega([x_1,x_2,y_1]_B,y_2,y_3)+\theta(y_2,y_3)\omega(x_1,x_2,y_1)+\omega(y_1,[x_1,x_2,y_2]_B,y_3)\nonumber\\
&-\theta(y_1,y_3)\omega(x_1,x_2,y_2)+\omega(y_1,y_2,[x_1,x_2,y_3]_B)+D(y_1,y_2)\omega(x_1,x_2,y_3),\nonumber
 \end{align}
\end{defi}

\begin{defi} Suppose we have two non-abelian (2,3)-cocycles on $B$ with values in $V$, denoted as \((\nu_1,\omega_1,\mu_1,\theta_1,D_1)\) and \((\nu_2,\omega_2,\mu_2,\theta_2,D_2)\). These cocycles are considered equivalent if there exists a linear map \(\varphi:B\to V\) that satisfies the following equalities for all \(x,y,z\in B\) and \(a,b\in V\):
\begin{align}\label{E1}&\omega_1(x,y,z)-\omega_2(x,y,z)\\
=&\theta_{2}(x,z)\varphi(y)-D_{2}(x,y)\varphi(z)-\theta_{2}(y,z)\varphi(x)\nonumber\\
&-[\varphi(x),\varphi(y),\varphi(z)]_V+\varphi[x,y,z]_B,\nonumber
 \end{align}
 \begin{align}\label{E2}
 \nu_1(x,y)-\nu_2(x,y)=&\varphi(x) *_V \varphi(y)+\varphi (x*_B y)\\
 &-\mu_{2}(x)\varphi(y)+\mu_{2}(y)\varphi(x),\nonumber
 \end{align}
\begin{equation}\label{E3}\mu_1(x)a-\mu_2(x)a=a*_V \varphi(x),\end{equation}
   \begin{equation}\label{E4}\theta_1(x,y)a-\theta_2(x,y)a=[a,\varphi(x),\varphi(y)]_V,
 \end{equation}
 \begin{equation}\label{E5}D_{1}(x,y)a-D_{2}(x,y)a=[\varphi(x),\varphi(y),a]_V,
 \end{equation}
 \end{defi}
To streamline notation, we also refer to a non-abelian (2,3)-cocycle \((\nu,\omega,\mu,\theta,D)\) simply as \((\nu,\omega)\). The equivalence class of a non-abelian (2,3)-cocycle \((\nu,\omega,\mu,\theta,D)\) is denoted by \([(\nu,\omega)]\). Additionally, the collection of all equivalence classes of non-abelian (2,3)-cocycles is represented by \(H^{(2,3)}_{\text{nab}}(B,V)\).

Using the above notations, we define multilinear maps $*_{\nu}$ and $[  \ , \ , \ ]_{\omega}$ on $ B \oplus V $ by
\begin{align}\label{NLts0}(x+a)*_{\nu}(y+b)&=x*_B y+\nu(x,y)+\mu(x)b-\mu(y)a+a*_V b
,\end{align}
\begin{align}\label{NLts}[x+a,y+b,z+c]_{\omega}=&[x,y,z]_B+\omega(x,y,z)+D(x,y)c+\theta(y,z)a
\\&-\theta(x,z)b+[a,b,c]_V,\nonumber\end{align}
for all $x,y,z\in B$ and $a,b,c\in V$.

\begin{pro} \label{BOL}Given the established notations above,
the structure $(B\oplus V,*_{\nu},[  \  , \ , \
]_{\omega})$ forms a Bol algebra if and only if the collection $(\nu,\omega,\mu,\theta,D)$ constitutes a non-abelian (2,3)-cocycle.
For simplicity, we refer to this Bol algebra as $B\oplus_{(\nu,\omega)}V$.
\end{pro}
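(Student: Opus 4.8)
The plan is to verify directly the five defining axioms of a Bol algebra --- the two skew-symmetries in \eqref{eq2.1}, the ternary Jacobi identity \eqref{eq2.3}, and the two compatibility identities \eqref{eq2.4} and \eqref{eq2.5} --- for the structure $(B\oplus V,*_{\nu},[\,,\,,\,]_{\omega})$ defined by \eqref{NLts0}--\eqref{NLts}, and to show that their validity is equivalent to the simultaneous validity of \eqref{B00}--\eqref{B1}. The key observation is that $*_{\nu}$ and $[\,,\,,\,]_{\omega}$ are multilinear, so each axiom is a multilinear identity in its arguments; evaluated on the direct sum $B\oplus V$, it holds identically if and only if it holds on every ``pure'' tuple in which each argument lies entirely in $B$ or entirely in $V$. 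One sorts these tuples by the pattern of $B$- and $V$-slots. In every case the $B$-component of the identity reduces, after the $V$-valued terms are discarded, to the corresponding axiom for the Bol algebra $B$ (using that $(y+b)*_\nu(z+c)$ has $B$-part $y*_B z$, and similarly for the bracket), and therefore holds automatically; so only the $V$-valued component of each axiom must be analysed, and there $(\mu,\theta,D)$ is treated as unrestricted data, its ``defect'' being recorded by $\nu,\omega$.

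First I would dispose of the easy axioms. Feeding $x+a,y+b$ into the skew-symmetry relations and using \eqref{NLts0}, \eqref{NLts} together with the skew-symmetries of $*_B,*_V$ and of the brackets of $B$ and $V$, the $V$-component of $(x+a)*_\nu(y+b)+(y+b)*_\nu(x+a)=0$ collapses to $\nu(x,y)+\nu(y,x)=0$, and that of $[x+a,y+b,z+c]_\omega+[y+b,x+a,z+c]_\omega=0$ collapses to $\omega(x,y,z)+\omega(y,x,z)=0$ together with $D(x,y)c+D(y,x)c=0$; these are exactly \eqref{B00} and \eqref{B01}. Likewise the $V$-component of the ternary Jacobi identity \eqref{eq2.3} for $[\,,\,,\,]_{\omega}$, after the $B$-part and the $V$-bracket part cancel via the Bol axioms for $B$ and $V$, yields the cyclic identity \eqref{B12} for $\omega$ (from the slots carrying no $V$-entry) and, by collecting the coefficient of each $a_i$, the relation \eqref{B13} between $D$ and $\theta$.

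The substance lies in the two identities \eqref{eq2.4} and \eqref{eq2.5}, each involving six ambient arguments. Evaluating \eqref{eq2.4} at $x_1+a_1,x_2+a_2,y_1+b_1,y_2+b_2$ and expanding every factor $(u+p)*_\nu(v+q)$ and every bracket $[u+p,v+q,w+r]_\omega$ via \eqref{NLts0}--\eqref{NLts}, one keeps track of the nested term $(y_1*y_2)*(x_1*x_2)$ and of the cross terms mixing $\mu$ with $\nu$ and $D$. Setting $a_1=a_2=b_1=b_2=0$ gives the $V$-valued identity \eqref{B22}; allowing one or both of the $y$-slots (and, symmetrically, of the $x$-slots) to lie in $V$ produces \eqref{B23}, \eqref{B24}, \eqref{B25} and \eqref{B32}, according to which slots carry $V$-entries. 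In the same way \eqref{eq2.5} evaluated on elements of $B$ gives \eqref{B1}, and replacing one, two, or three arguments by elements of $V$ gives \eqref{B31}, \eqref{B2}, \eqref{B3}, \eqref{B4}. Running the computation in reverse: assuming \eqref{B00}--\eqref{B1} together with the Bol-algebra axioms of $B$ and $V$, every pure-tuple component of \eqref{eq2.1}, \eqref{eq2.3}, \eqref{eq2.4}, \eqref{eq2.5} vanishes, hence the identities hold for arbitrary arguments by multilinearity, so $B\oplus_{(\nu,\omega)}V$ is a Bol algebra.

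I expect the main obstacle to be organizational rather than conceptual: the expansion of \eqref{eq2.4} is long because of the quadratic term $(y_1*y_2)*(x_1*x_2)$ and the numerous cross terms coupling $\mu$ with $\nu$ and $D$, so keeping the signs straight and correctly isolating which monomial of $V$-entries yields which of \eqref{B22}--\eqref{B25} and \eqref{B32} demands care; a minor additional point is that some tuples produce the same cocycle identity via different substitutions, which serves only as an internal consistency check. Once the dictionary ``pure-tuple component $\longleftrightarrow$ cocycle identity'' is in place, what remains is a finite, if tedious, verification.
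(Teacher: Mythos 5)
Your proposal is correct and is essentially the argument the paper intends: the paper states Proposition \ref{BOL} without an explicit proof, treating it as a direct verification, and the same style of expansion (pure $B$- versus $V$-slots, with the $B$-component trivially reducing to the Bol axioms of $B$ and the $V$-component yielding \eqref{B00}--\eqref{B1}) is exactly what appears in the computation carried out for Proposition \ref{CB}. Your slot-by-slot bookkeeping via multilinearity is the right organization of that verification, so no genuinely different route or gap is involved.
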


The proof of the above Proposition \ref{BOL} is by direct computations, so we omit the details.

Let
 $\mathcal{E}:0\longrightarrow V\stackrel{i}{\longrightarrow} \hat B \stackrel{p}{\longrightarrow} B \longrightarrow0$
be a non-abelian extension of  $B$ by
$V$ with a section $s$ of $p$.
Define $\nu_{s}:B\otimes B\longrightarrow V,~\omega_{s}:B \otimes B  \otimes B \longrightarrow V,~\mu_{s}:B
\longrightarrow \mathfrak{gl}(V),~\theta_{s},D_{s}:B \wedge B \longrightarrow \mathfrak{gl}(V)$
 respectively by
 \begin{equation}\label{C0}\nu_{s}(x,y)=s(x)*_{\hat{B}}s(y)-s(x*_{B}y),\end{equation}
\begin{equation}\label{C1}\omega_{s}(x,y,z)=[s(x),s(y),s(z)]_{\hat{B}}-s[x,y,z]_{B},\end{equation}
\begin{equation}\label{C2}\theta_{s}(x,y)a=[a,s(x),s(y)]_{\hat{B}},\end{equation}
\begin{equation}\label{C3}D_{s}(x,y)a=[s(x),s(y),a]_{\hat{B}},\end{equation}
\begin{equation}\label{C4}\mu_{s}(x)a=s(x)*_{\hat{B}}a\end{equation}
for all $x,y,z\in B,a,b\in V$.

Through straightforward calculations, we arrive at the following results:

\begin{pro} \label{CB}
With the above notations, the tuple $(\nu_{s},\omega_{s},\mu_{s},\theta_{s},D_{s})$ forms a non-abelian (2,3)-cocycle on $B$ with values in $V$.
Moreover, two equivalent non-abelian extension give equivalent 2-cocycles.
\end{pro}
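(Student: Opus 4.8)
The plan is to verify the two assertions of Proposition~\ref{CB} separately, in both cases reducing everything to the defining axioms of the Bol algebra $\hat B$ together with the section formulas \eqref{C0}--\eqref{C4}. For the first assertion, I would check each of the identities \eqref{B00}--\eqref{B1} in turn. The strategy is uniform: fix a section $s$, and for elements of $B$ replace every occurrence of a product or bracket in $\hat B$ using the rule $s(x)*_{\hat B}s(y)=s(x*_By)+\nu_s(x,y)$ and $[s(x),s(y),s(z)]_{\hat B}=s[x,y,z]_B+\omega_s(x,y,z)$, while for elements of $V=i(V)\subseteq\hat B$ use $s(x)*_{\hat B}a=\mu_s(x)a$, $[s(x),s(y),a]_{\hat B}=D_s(x,y)a$, $[a,s(x),s(y)]_{\hat B}=\theta_s(x,y)a$, and the facts that $V$ is a subalgebra (so $a*_{\hat B}b=a*_Vb$ and $[a,b,c]_{\hat B}=[a,b,c]_V$) closed under these operations (so $\mu_s(x)a,D_s(x,y)a,\theta_s(x,y)a\in V$, because $p$ is an algebra map killing $V$). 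Concretely: \eqref{B00} is the skew-symmetry \eqref{eq2.1} of $\hat B$; \eqref{B12} is the ternary Jacobi identity \eqref{eq2.3}; \eqref{B01} and \eqref{B13} come from \eqref{eq2.1} applied to arguments in $V$; \eqref{B24}, \eqref{B25}, \eqref{B2}, \eqref{B3}, \eqref{B4} are the various Bol axioms \eqref{eq2.4}--\eqref{eq2.5} specialized to arguments lying in $V$; \eqref{B22}, \eqref{B23}, \eqref{B31}, \eqref{B32} are \eqref{eq2.4}--\eqref{eq2.5} with a mix of $s(x_i)$ and $a\in V$; and \eqref{B1} is \eqref{eq2.5} with all arguments of the form $s(x_i)$, after expanding $[s(x),s(y),s(z)]_{\hat B}$ and moving the resulting $\omega_s$ and $s[\,,\,]_B$ terms through the (now $V$-valued) actions $D_s,\theta_s$.

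The cleanest way to organize this is to invoke Proposition~\ref{BOL}: it already states that $(B\oplus V,*_{\nu},[\,,\,]_\omega)$ is a Bol algebra \emph{iff} $(\nu,\omega,\mu,\theta,D)$ is a non-abelian $(2,3)$-cocycle. So instead of grinding through all of \eqref{B00}--\eqref{B1} by hand, I would exhibit a linear isomorphism $\Phi:B\oplus V\to\hat B$, $\Phi(x+a)=s(x)+i(a)$, and show that with the structure maps $(\nu_s,\omega_s,\mu_s,\theta_s,D_s)$ defined by \eqref{C0}--\eqref{C4}, $\Phi$ carries $*_{\nu_s}$ and $[\,,\,]_{\omega_s}$ to $*_{\hat B}$ and $[\,,\,]_{\hat B}$. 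Indeed $\Phi$ is a bijection since $\hat B=s(B)\oplus i(V)$ as vector spaces; and the transport identities $\Phi\big((x+a)*_{\nu_s}(y+b)\big)=\Phi(x+a)*_{\hat B}\Phi(y+b)$, $\Phi[x+a,y+b,z+c]_{\omega_s}=[\Phi(x+a),\Phi(y+b),\Phi(z+c)]_{\hat B}$ are exactly \eqref{NLts0} and \eqref{NLts} rewritten via \eqref{C0}--\eqref{C4} (expand the right-hand sides by bilinearity/trilinearity into the four types of terms and match). Since $\hat B$ is a Bol algebra, $B\oplus_{(\nu_s,\omega_s)}V$ is one too, and Proposition~\ref{BOL} then gives that $(\nu_s,\omega_s,\mu_s,\theta_s,D_s)$ is a non-abelian $(2,3)$-cocycle. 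This sidesteps the bulk of the computation.

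For the second assertion, let $\mathcal E_1$ and $\mathcal E_2$ be equivalent non-abelian extensions with an isomorphism $f:\hat B_1\to\hat B_2$ as in \eqref{Ene1}, and fix a section $s_1$ of $p_1$. Then $s_2:=f\circ s_1$ is a section of $p_2$ (since $p_2f=p_1$), and I would define $\varphi:B\to V$ by comparing $s_2$ with an arbitrary section $s_2'$ of $p_2$: the difference $s_2(x)-s_2'(x)$ lies in $\ker p_2=i_2(V)$, so set $i_2\varphi(x)=s_2(x)-s_2'(x)$. Substituting $s_2=s_2'+i_2\varphi$ into \eqref{C0}--\eqref{C4} for $\mathcal E_2$ and using that $f$ restricts to the identity on $V$ (so $f$ intertwines all the structure maps and $\nu_{s_2}^{(1)}=\nu_{s_2}^{(2)}$ etc. via $f$), the Bol-algebra identities of $\hat B_2$ expand the five differences $\omega_{s_1}-\omega_{s_2'}$, $\nu_{s_1}-\nu_{s_2'}$, $\mu_{s_1}-\mu_{s_2'}$, $\theta_{s_1}-\theta_{s_2'}$, $D_{s_1}-D_{s_2'}$ into precisely the right-hand sides of \eqref{E1}--\eqref{E5}. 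The one routine-but-delicate point is bookkeeping the identification: $i_1=f^{-1}i_2$ and $p_2f=p_1$ mean that the cocycle attached to $\mathcal E_1$ via $s_1$ equals the cocycle attached to $\mathcal E_2$ via $f s_1$, so "equivalent extensions give equivalent cocycles" is the composite of this transport with the change-of-section formula. I expect the main obstacle to be exactly this second part — not any single identity, but keeping the two short exact sequences, the map $f$, and the change of section all consistently aligned so that the five differences land on \eqref{E1}--\eqref{E5} without sign errors; the change-of-section computation for $\omega$ (which mixes $\theta$, $D$, and the ternary bracket $[\varphi(x),\varphi(y),\varphi(z)]_V$) is the most involved single step.
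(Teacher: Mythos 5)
Your first paragraph (direct verification of \eqref{B00}--\eqref{B1} by expanding the Bol axioms of $\hat B$ through \eqref{C0}--\eqref{C4}) and your last paragraph (equivalence of cocycles) are essentially the paper's own proof: the paper checks that $\nu_s,\omega_s$ take values in $\ker p=V$, verifies the representative identity \eqref{B1} by expanding \eqref{eq2.5} on $s(x_1),s(x_2),[s(y_1),s(y_2),s(y_3)]_{\hat B}$ and declaring the remaining identities analogous, and then proves the "equivalent cocycles" claim by the change-of-section computation $\varphi=s-s'$, checking \eqref{E1}--\eqref{E5}; your reduction of "equivalent extensions" to a change of section via $s_2=fs_1$ (using $f|_V=\mathrm{id}_V$) is the same bookkeeping the paper performs when it later classifies extensions, so that part is sound and aligned.

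The genuine gap is in the route you actually advocate, the shortcut through Proposition \ref{BOL}. You claim the transport identities for $\Phi(x+a)=s(x)+i(a)$ "are exactly \eqref{NLts0} and \eqref{NLts} rewritten via \eqref{C0}--\eqref{C4}", matching "four types of terms". For the ternary bracket this is false as stated: expanding $[s(x)+a,\,s(y)+b,\,s(z)+c]_{\hat B}$ trilinearly produces eight families of terms, and three of them, namely $[s(x),b,c]_{\hat B}$, $[a,s(y),c]_{\hat B}$ and $[a,b,s(z)]_{\hat B}$, have no counterpart in \eqref{NLts}; in the model $B\oplus_{(\nu_s,\omega_s)}V$ the corresponding brackets are identically zero (put $x=y=0$, $c=0$ etc.\ in \eqref{NLts}). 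For an arbitrary short exact sequence of Bol algebras nothing in \eqref{C0}--\eqref{C4} forces these mixed brackets in $\hat B$ to vanish ($V$ is only an ideal), so "$\Phi$ is a Bol algebra isomorphism onto $\hat B$" is not a formal rewriting of definitions but an additional statement that must be proved — and it is precisely the hard content you were hoping to bypass. The same mixed terms are also what make identities such as \eqref{B23}, \eqref{B24}, \eqref{B25} delicate if one does the direct check (they are not mere "Bol axioms specialized to arguments in $V$", since they involve two arguments from $V$ and two from $s(B)$). The paper never invokes Proposition \ref{BOL} inside this proof and only displays \eqref{B1}, whose verification uses arguments from $s(B)$ alone; to make your argument complete you must either carry out the direct verification including the identities with repeated $V$-arguments, or first establish (or assume) the vanishing of the mixed brackets before transporting the structure along $\Phi$.
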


\begin{proof}
Since $p$ is a Bol algebra homomorphism, we have
\begin{eqnarray*}
p\omega(x, y, z)&=&p([s(x),s(y),s(z)]_{\hat{B}})-p(s[x,y,z]_{B})
\\&=&([ps(x),ps(y),ps(z)]_{\hat{B}})-(ps[x,y,z]_{B})=[x, y, z]-[x, y, z]=0.
\end{eqnarray*}
Thus $\omega(x, y, z)\in Ker p=V$.
Similarly, one show $\nu(x, y)\in Ker p=V$.
%This show that $(\nu,\omega,\mu,\theta, D)$ is well defined.

By the equality
\begin{eqnarray*}
&&[s(x_1), s(x_2), [s(y_1), s(y_2), s(y_3)]_{\hat{B}}]_{\hat{B}}\\
&=&[[s(x_1), s(x_2), s(y_1)]_{\hat{B}}, s(y_2), s(y_3)]_{\hat{B}} + [ s(y_1), [s(x_1), s(x_2), s(y_2)]_{\hat{B}}, s(y_3)]_{\hat{B}} \\
&&+ [s(y_1), s(y_2), [s(x_1), s(x_2), s(y_3)]_{\hat{B}}]_{\hat{B}}.
\end{eqnarray*}
Expanding the left side of the equation yields
\begin{eqnarray*}
&&[s(x_1), s(x_2), [s(y_1), s(y_2), s(y_3)]_{\hat{B}}]_{\hat{B}}\\
&=&[s(x_1), s(x_2), s[y_1, y_2, y_3]_B+\omega(y_1, y_2, y_3)]_{\hat{B}}\\
&=&s([x_1, x_2, [y_1, y_2, y_3]_B]_B)+D(x_1,x_2)\omega(y_1, y_2, y_3)+\omega(x_1, x_2, [y_1, y_2, y_3]_B).
\end{eqnarray*}
Expanding the right side of the equation yields
\begin{eqnarray*}
&&[[s(x_1), s(x_2), s(y_1)]_{\hat{B}}, s(y_2), s(y_3)]_{\hat{B}} + [ s(y_1), [s(x_1), s(x_2), s(y_2)]_{\hat{B}}, s(y_3)]_{\hat{B}}\\
&&+ [s(y_1), s(y_2), [s(x_1), s(x_2), s(y_3)]_{\hat{B}}]_{\hat{B}}\\
&=&[s[x_1, x_2, y_1]_B+\omega(x_1, x_2, y_1), s(y_2), s(y_3)]_{\hat{B}}
+[s(y_1), s[x_1, x_2, y_2]_B+\omega(x_1, x_2, y_2), s(y_3)]_{\hat{B}}\\
&&+[s(y_1), s(y_2), s[x_1, x_2, y_3]_B+\omega(x_1, x_2, y_3)]_{\hat{B}}\\
&=&s([[x_1, x_2, y_1]_B, y_2, y_3]_B)+\theta(y_2, y_3)\omega(x_1, x_2, y_1)+\omega([x_1, x_2, y_1]_B, y_2, y_3)\\
&&+s([y_1,s[x_1, x_2, y_2]_B, y_3]_B)-\theta(y_1, y_3)\omega(x_1, x_2, y_2)+\omega(y_1,s[x_1, x_2, y_2]_B, y_3)\\
&&+s([y_1, y_2, s[x_1, x_2, y_3]_B]_B)+D(y_1, y_2)\omega(x_1, x_2, y_3)+\omega(y_1, y_2, s[x_1, x_2, y_3]_B).
\end{eqnarray*}
Thus, we have
\begin{eqnarray*}
&&D(x_1,x_2)\omega(y_1, y_2, y_3)+\omega(x_1, x_2, [y_1, y_2, y_3]_B)\\&=&\theta(y_2, y_3)\omega(x_1, x_2, y_1)+\omega([x_1, x_2, y_1]_B, y_2, y_3)-\theta(y_1, y_3)\omega(x_1, x_2, y_2)\\&&+\omega(y_1,s[x_1, x_2, y_2]_B, y_3)+D(y_1, y_2)\omega(x_1, x_2, y_3)+\omega(y_1, y_2, s[x_1, x_2, y_3]_B).
\end{eqnarray*}
Therefore we obtain that the non-abelian (2, 3)-cocycle condition \eqref{B1} holds.  Similarly, one show that the conditions \eqref{B00}--\eqref{B4} hold too. Thus $(\nu,\omega,\mu,\theta,D)$ is a non-abelian (2, 3)-cocycle.

Now, we show that $(\nu,\omega,\mu,\theta,D)$ does not depend on the choice of the section $s$. Let $s$ and $s'$ be two section and denote by $\varphi(x)=s(x)-s'(x)$. Then we have $p(\varphi(x))=p(s(x))-p(s'(x))=0$, thus $\varphi(B)\subseteq  Ker p  \cong   V$, that is $\varphi \in \mathrm{Hom}(B, V)$.

We compute
\begin{eqnarray*}
&&\omega(x, y, z)-\omega'(x, y, z)\\
&=&([s(x),s(y),s(z)]_{\hat{B}}-s[x,y,z]_{B})-([s'(x),s'(y),s'(z)]_{\hat{B}}-s'[x,y,z]_{B})\\
&=&-[s'(x),\varphi(y),s'(z)]_{\hat{B}}-[s'(x),s'(y),\varphi(z)]_{\hat{B}}-[\varphi(x),s'(y),s'(z)]_{\hat{B}}\\
&&-[\varphi(x),\varphi(y),\varphi(z)]_{\hat{B}}+\varphi[x,y,z]_{B}\\
&=&\theta'(x,z)\varphi(y)-D'(x,y)\varphi(z)-\theta'(y,z)\varphi(x)-[\varphi(x),\varphi(y),\varphi(z)]_{\hat{B}}+\varphi[x,y,z]_{B}.
\end{eqnarray*} which yields that
 Eq.~(\ref{E1}) holds.
\begin{eqnarray*}
&&\nu(x, y)-\nu'(x, y)\\
&=&(s(x)*_{\hat{B}}s(y)-s(x*_{B}y))-(s'(x)*_{\hat{B}}s'(y)-s'(x*_{B}y))\\
&=&(s(x)*_{\hat{B}}s(y)-s'(x)*_{\hat{B}}s'(y))-(s(x*_{B}y)-s'(x*_{B}y))\\
&=&\varphi(x) *_{\hat{B}} \varphi(y)+\varphi (x*_B y) -\mu'(x)\varphi(y)+\mu'(y)\varphi(x).
\end{eqnarray*} which yields that
 Eq.~(\ref{E2}) holds.
\begin{eqnarray*}
&&\theta(x, y)a-\theta'(x, y)a\\
&=&[a,s(x),s(y)]_{\hat{B}}-[a,s'(x),s'(y)]_{\hat{B}}\\
&=&[a,\varphi(x),\varphi(y)]_{\hat{B}}.
\end{eqnarray*} which yields that
 Eq.~(\ref{E4}) holds.
\begin{eqnarray*}
&&D(x, y)a-D'(x, y)a\\
&=&[s(x),s(y),a]_{\hat{B}}-[s'(x),s'(y),a]_{\hat{B}}\\
&=&[\varphi(x),\varphi(y), a]_{\hat{B}}.
\end{eqnarray*} which yields that
 Eq.~(\ref{E5}) holds. This finishes the proof.
\end{proof}

Based on  Proposition \ref{BOL} and Proposition \ref{CB}, for a non-abelian extension $\mathcal{E}:0\longrightarrow V\stackrel{i}{\longrightarrow} \hat{B}\stackrel{p}{\longrightarrow}B\longrightarrow0$ with a section $s$, we obtain a non-abelian (2,3)-cocycle and the corresponding Bol algebra $B\oplus_{(\nu_s,\omega_s)} V$. Consequently, $\mathcal{E}_{(\nu_s,\omega_s)}:0\longrightarrow V\stackrel{i}{\longrightarrow} B\oplus_{(\nu_s,\omega_s)} V\stackrel{\pi}{\longrightarrow}B\longrightarrow0$ forms a non-abelian extension.
Since any element $\hat{w}\in \hat{B}$ can be expressed as $\hat{w}=a+s(x)$, where $a\in V,x\in B$, we define a linear map \(f:\hat{B}\rightarrow B\oplus_{(\nu_s,\omega_s)}V\) such that \(f(\hat{w})=f(a+s(x))=a+x\). It is easy to verify that $f$ is an isomorphism of Bol algebras and satisfies the following commutative diagram:
 \begin{equation*} \xymatrix{
  \mathcal{E}:0 \ar[r] & V\ar@{=}[d] \ar[r]^-{i} & \hat{B}\ar[d]_-{f} \ar[r]^-{p} & B \ar@{=}[d] \ar[r] & 0\\
 \mathcal{E}_{(\nu_s,\omega_s)}:0 \ar[r] & V \ar[r]^-{i} & B\oplus_{(\nu_s,\omega_s)} V \ar[r]^-{\pi} & B  \ar[r] & 0,}\end{equation*}

Next, we delve into the connection between non-abelian (2,3)-cocycles and the extensions they induce.

\begin{pro}
Consider two Bol algebras $B$ and $V$. The equivalence classes of non-abelian extensions of $B$ by $V$ can be classified by using the non-abelian cohomology group. Specifically, there is a bijiection map between the set of equivalence classes of non-abelian extensions \(\mathcal{E}_{nab}(B,V)\) and the non-abelian cohomology group \(H^{(2,3)}_{nab}(B,V)\).
 \end{pro}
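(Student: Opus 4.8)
The plan is to construct explicit maps in both directions between $\mathcal{E}_{nab}(B,V)$ and $H^{(2,3)}_{nab}(B,V)$ and show they are mutually inverse. In one direction, given a non-abelian extension $\mathcal{E}:0\to V\to\hat B\to B\to 0$, choose a section $s$ of $p$ and send $\mathcal{E}$ to the equivalence class $[(\nu_s,\omega_s)]$ of the non-abelian $(2,3)$-cocycle produced in Proposition \ref{CB}. The first thing to check is that this is well defined: Proposition \ref{CB} already establishes that a different section $s'$ yields an equivalent cocycle (via $\varphi=s-s'$, Eqs.~\eqref{E1}--\eqref{E5}), and that equivalent extensions yield equivalent cocycles. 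So the assignment $\Theta:\mathcal{E}_{nab}(B,V)\to H^{(2,3)}_{nab}(B,V)$, $[\mathcal{E}]\mapsto[(\nu_s,\omega_s)]$, descends to equivalence classes.

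In the other direction, given a non-abelian $(2,3)$-cocycle $(\nu,\omega,\mu,\theta,D)$, Proposition \ref{BOL} gives the Bol algebra $B\oplus_{(\nu,\omega)}V$, and hence the extension $\mathcal{E}_{(\nu,\omega)}:0\to V\xrightarrow{i} B\oplus_{(\nu,\omega)}V\xrightarrow{\pi}B\to 0$ where $i(a)=a$ and $\pi(x+a)=x$. Define $\Psi:H^{(2,3)}_{nab}(B,V)\to\mathcal{E}_{nab}(B,V)$ by $[(\nu,\omega)]\mapsto[\mathcal{E}_{(\nu,\omega)}]$. To see this is well defined, suppose $(\nu_1,\omega_1,\mu_1,\theta_1,D_1)$ and $(\nu_2,\omega_2,\mu_2,\theta_2,D_2)$ are equivalent via a linear map $\varphi:B\to V$ satisfying \eqref{E1}--\eqref{E5}; I would then define $F:B\oplus_{(\nu_1,\omega_1)}V\to B\oplus_{(\nu_2,\omega_2)}V$ by $F(x+a)=x+a+\varphi(x)$ and verify, using exactly the five equivalence identities, that $F$ is a Bol algebra homomorphism making the diagram \eqref{Ene1} commute (and $F$ is clearly bijective with inverse $x+a\mapsto x+a-\varphi(x)$). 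This is a direct but somewhat lengthy computation comparing $F((x+a)*_{\nu_1}(y+b))$ with $F(x+a)*_{\nu_2}F(y+b)$ and similarly for the bracket; the terms that do not match up are precisely absorbed by \eqref{E1}, \eqref{E2}, \eqref{E3}, \eqref{E4}, \eqref{E5}.

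Finally I would check $\Psi\circ\Theta=\mathrm{id}$ and $\Theta\circ\Psi=\mathrm{id}$. For $\Psi\circ\Theta$: starting from $\mathcal{E}$ with section $s$, the discussion following Proposition \ref{CB} already exhibits the isomorphism $f:\hat B\to B\oplus_{(\nu_s,\omega_s)}V$, $a+s(x)\mapsto a+x$, fitting into a commutative diagram with identities on $V$ and $B$, so $[\mathcal{E}]=[\mathcal{E}_{(\nu_s,\omega_s)}]$. For $\Theta\circ\Psi$: starting from a cocycle $(\nu,\omega,\mu,\theta,D)$, choose for $\mathcal{E}_{(\nu,\omega)}$ the obvious section $s_0(x)=x+0$; then from \eqref{NLts0} and \eqref{NLts} one reads off directly that $\nu_{s_0}=\nu$, $\omega_{s_0}=\omega$, $\mu_{s_0}=\mu$, $\theta_{s_0}=\theta$, $D_{s_0}=D$, so $\Theta(\Psi[(\nu,\omega)])=[(\nu,\omega)]$.

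The main obstacle is the well-definedness of $\Psi$, i.e.\ verifying that the map $F(x+a)=x+a+\varphi(x)$ is genuinely a homomorphism of Bol algebras: this requires carefully expanding both the binary operation $*_\nu$ and the ternary bracket $[\,\cdot\,,\,\cdot\,,\,\cdot\,]_\omega$ through $F$ and matching every resulting term against the equivalence relations \eqref{E1}--\eqref{E5}; getting the bookkeeping right for the ternary bracket (which mixes $\omega$, $D$, $\theta$, and the bracket $[\,\cdot\,,\,\cdot\,,\,\cdot\,]_V$ on $V$) is the delicate part. Everything else — well-definedness of $\Theta$, and the two composite identities — is either already proved in the excerpt or is an immediate unwinding of definitions.
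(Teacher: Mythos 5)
Your plan is essentially the paper's own proof, only packaged differently: the paper defines $\Theta([\mathcal{E}])=[(\nu_s,\omega_s)]$, checks well-definedness via Proposition \ref{CB}, proves injectivity by converting an equivalence $\varphi$ of cocycles into an equivalence of the extensions $\mathcal{E}_{(\nu_1,\omega_1)}$ and $\mathcal{E}_{(\nu_2,\omega_2)}$, and proves surjectivity via Proposition \ref{BOL}. Your ``well-definedness of $\Psi$'' is exactly that injectivity step, your identity $\Theta\circ\Psi=\mathrm{id}$ is the surjectivity step (read off with the canonical section $s_0(x)=x$), and $\Psi\circ\Theta=\mathrm{id}$ is the isomorphism $a+s(x)\mapsto a+x$ already exhibited after Proposition \ref{CB}, so the two organizations carry the same content.

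One concrete correction to the step you yourself flag as delicate: with the equivalence identities \eqref{E1}--\eqref{E5} normalized as in the paper (with $\mu_2,\theta_2,D_2$ on the right-hand sides), the connecting map $B\oplus_{(\nu_1,\omega_1)}V\to B\oplus_{(\nu_2,\omega_2)}V$ must be $x+a\mapsto x+a-\varphi(x)$, as the paper takes it, not $x+a\mapsto x+a+\varphi(x)$. With your sign, comparing $F((x+a)*_{\nu_1}(y+b))$ with $F(x+a)*_{\nu_2}F(y+b)$ using \eqref{NLts0} and \eqref{E3} leaves the $a,b$-dependent terms $2\,a*_V\varphi(y)+2\,\varphi(x)*_V b$ uncancelled, and these cannot be absorbed by \eqref{E2}; with the minus sign the requirement collapses exactly to \eqref{E2}. (Your formula is in fact the inverse of the correct map, i.e.\ it is a homomorphism in the opposite direction $B\oplus_{(\nu_2,\omega_2)}V\to B\oplus_{(\nu_1,\omega_1)}V$, and either direction suffices to establish equivalence of the extensions, but the term-by-term verification against \eqref{E1}--\eqref{E5} in the direction you chose only closes with the sign fixed.) Apart from this sign slip, the argument matches the paper's.
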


 \begin{proof}
 Define the map
 \begin{equation*}
 \Theta:\mathcal{E}_{nab}(B,V)\rightarrow H_{nab}^{(2,3)}(B,V),\quad  \mathcal{E}\mapsto (\nu_s,\omega_s).
 \end{equation*}
 Initially, we establish that $\Theta$ is well-defined. Suppose $\mathcal{E}_1$ and $\mathcal{E}_2$ are two non-abelian extensions of $B$ by $V$ that are equivalent via a map $f$, ensuring the commutative diagram (\ref{Ene1}) holds. Given a section \(s_1:B\to\hat{B}_1\) for \(p_1\), we have \(p_2 f s_1=p_1 s_1=\text{id}_B\), which implies \(s_2=f s_1\) is a section for \(p_2\). Now, consider two non-abelian (2,3)-cocycles \((\nu_1,\omega_1,\mu_1,\theta_1,D_1)\) and \((\nu_2,\omega_2,\mu_2,\theta_2,D_2)\), induced by the sections \(s_1\) and \(s_2\) respectively. The mapping $\Theta$ then correlates the equivalence class of the non-abelian extension with the corresponding class of non-abelian (2,3)-cocycles. Then we find that,
\begin{eqnarray*}D_{1}(x,y)a&=&f(D_1(x,y)a)=f([s_1(x),s_1(y),a]_{\hat{B}_1})
\\&=&[fs_1(x),fs_1(y),f(a)]_{\hat{B}_2}\\&=&[s_2(x),s_2(y),a]_{\hat{B}_2}\\&=&D_{2}(x,y)a .\end{eqnarray*} By the same token, we have
\begin{equation*}\theta_1(x,y)a=\theta_2(x,y)a,\nu_1(x,y)=\nu_2(x,y),\omega_1(x,y,z)=\omega_2(x,y,z),\end{equation*}
Thus, we can proof  that $\Theta$ is well-defined.

 Next, we verify that $\Theta$ is injective. Indeed, suppose that
 $\Theta([\mathcal{E}_1])=[(\nu_1,\omega_1)]$ and $\Theta([\mathcal{E}_2])=[(\nu_2,\omega_2)]$. If
the equivalent classes $[(\nu_1,\omega_1)]=[(\nu_2,\omega_2)]$, we obtain that the non-abelian (2,3)-cocycles
 $(\nu_1,\omega_1,\mu_1,\theta_1,D_{1})$ and
$(\nu_2,\omega_2,\mu_2,\theta_2,D_{2})$ are equivalent
via the linear map $\varphi:B\longrightarrow
V$, satisfying Eqs.~~(\ref{E1})-(\ref{E5}). Define a linear map
$f:B\oplus_{(\nu_1,\omega_1)} V\longrightarrow B\oplus_{(\nu_2,\omega_2)}
V$ by
\begin{equation*}f(x+a)=x-\varphi(x)+a,~~\forall~x\in B,a\in V.\end{equation*}
 According to Eq.~~(\ref{NLts0}), for all $x,y,z\in B,a,b,c\in V$, we get
\begin{eqnarray*}&&f((x+a)*_{\nu_1}(y+b))
\\&=&f(x*_{B}y+\nu(x,y)+\mu_1(x)b-\mu_1(y)a+a*_{V} b)
\\&=&x*_{B}y-\varphi(x*_{B}y)+\nu_1(x,y)+\mu_1(x)b-\mu_1(y)a+a*_{V}b,\end{eqnarray*}
and
\begin{eqnarray*}&&f(x+a)*_{\nu_2}f(y+b)
\\&=&[(x-\varphi(x)+a)*_{\nu_2}(y-\varphi(y)+b)]
\\&=&x*_{B}y+\nu_2(x,y)+\mu_2(x)(b-\varphi(y))-\mu_2(y)(a-\varphi(x))+(a-\varphi(x))*_{V}(b-\varphi(y))
\\&=&x*_{B}y+\nu_2(x,y)+\mu_2(x)b-\mu_2(x)\varphi(y)-\mu_2(y)a-\mu_2(y)\varphi(x)+a*_{V}b-a*_{V}\varphi(y)
\\&&-\varphi(x)*_{V}b+\varphi(x)*_{V}\varphi(y)
.\end{eqnarray*}
In view of Eqs.~(\ref{E1})-(\ref{E5}), we have
$f((x+a)*_{\nu_1}(y+b))=f(x+a)*_{\nu_2}f(y+b)$. Similarly, $f([x+a,y+b,z+c]_{\omega_1})=[f(x+a),f(y+b),f(z+c)]_{\omega_2}$.
Hence, $f$ is a homomorphism of Bol algebras. Clearly, the
following commutative diagram holds:
\begin{equation}
\xymatrix{
 \mathcal{E}_{(\nu_1,\omega_1)}: 0 \ar[r] & V\ar@{=}[d] \ar[r]^-{i} & B\oplus_{(\nu_1,\omega_1)} V \ar[d]_-{f} \ar[r]^-{\pi} & B \ar@{=}[d] \ar[r] & 0\\
\mathcal{E}_{(\nu_2,\omega_2)}: 0 \ar[r] & V \ar[r]^-{i} & B\oplus_{(\nu_2,\omega_2)} V \ar[r]^-{\pi} & B  \ar[r] & 0
 .}\end{equation}
Thus $\mathcal{E}_{(\nu_1,\omega_1)}$ and $\mathcal{E}_{(\nu_2,\omega_2)}$ are equivalent
non-abelian extensions of $B$ by $V$,
which means that $[\mathcal{E}_{(\nu_1,\omega_1)}]=[\mathcal{E}_{(\nu_2,\omega_2)}]$. Thus, $\Theta$ is injective.

 Finally, we claim that $\Theta$ is surjective.
 For any equivalent class of non-abelian (2,3)-cocycles $[(\nu,\omega)]$, by Proposition \ref{BOL}, there is
 a non-abelian extension of $B$ by $V$:
   \begin{equation*}\mathcal{E}_{(\nu,\omega)}:0\longrightarrow V\stackrel{i}{\longrightarrow} B\oplus_{(\nu,\omega)} V\stackrel{\pi}{\longrightarrow} B\longrightarrow0.\end{equation*}
   Thus we get, $\Theta([\mathcal{E}_{(\nu,\omega)}])=[(\nu,\omega)]$, which follows that $\Theta$ is surjective.
    Therefore, $\Theta$ is bijective. This finishes the proof.

 \end{proof}

%%%%%%%%%%%%%%%%%%%%%%%%%%%%%%%%%%%%%%%%%%%%%%%%%%%%%%%%%%%%%%%%%%%%%%%%%%%%%%%%%%%%%%%%%%%%%%%%%%%%%%%

%%%%%%%%%%%%%%%%%%%%%%%%%%%%%%%%%%%%%%%%%%%%%%%%%%%%%%%%%%%%%%%%%%%%%%%%%%%%%%%%%%%%%%%%%%%%%%%%%%%%%%%
\section{Extensibility of a pair of Bol algebra automorphisms}
In this section, we delve into the conditions under which pairs of automorphisms of a Bol algebra can be extended, and we provide a characterization of these pairs through equivalent conditions.

Let $\mathcal{E}:0\longrightarrow V\stackrel{i}{\longrightarrow} \hat{B}\stackrel{p}{\longrightarrow}B\longrightarrow0$
  be a non-abelian extension of $B$ by $V$ with a section $s$ of $p$.
Denote  $\mathrm{Aut}_{V}(\hat{B})=\{\gamma\in \mathrm{Aut} (\hat{B})\mid \gamma(V)=V\}.$

\begin{defi}

Consider a non-abelian extension $$\mathcal{E}:0\longrightarrow V\stackrel{i}{\longrightarrow} \hat{B}\stackrel{p}{\longrightarrow}B\longrightarrow0.$$A pair of automorphisms  $(\alpha,\beta)\in \mathrm{Aut} (B)\times \mathrm{Aut} (V)$ is termed inducible if there exists an automorphism $\gamma\in \mathrm{Aut}_{V}
(\hat{B})$ such that $i\beta=\gamma i,~p\gamma=\alpha p$. In other words, the following commutative diagram is satisfied:
\[\xymatrix@C=20pt@R=20pt{0\ar[r]&V\ar[d]_\beta\ar[r]^i&\hat{B}\ar[d]_\gamma\ar[r]^p&B
\ar[d]_\alpha\ar[r]&0\\0\ar[r]&V\ar[r]^i&\hat{B}\ar[r]^p&B\ar[r]&0.}\]
\end{defi}
It is pertinent to inquire under what conditions a pair of automorphisms $(\alpha,\beta)\in \mathrm{Aut} (B)\times \mathrm{Aut} (V)$ can be deemed inducible. We will explore this question in the subsequent discussion.

\begin{thm} \label{EC}
Consider a non-abelian extension of $B$ by
$V$ given by $0\longrightarrow V\stackrel{i}{\longrightarrow}
\hat{B}\stackrel{p}{\longrightarrow}B\longrightarrow0$, equipped with a section $s$ of $p$. Let $(\nu,\omega,\mu,\theta,D)$ denote the non-abelian (2,3)-cocycle induced by $s$. A pair of automorphisms $(\alpha,\beta)\in \mathrm{Aut}(B)\times \mathrm{Aut}(V)$ can be induced if and only if there exists a linear map $\varphi:B\longrightarrow V$ that fulfills the following conditions:
\begin{align}\label{Iam1}
     &\beta\omega(x,y,z)-\omega(\alpha(x),\alpha(y),\alpha(z))\\
     =&\theta(\alpha(x),\alpha(z))\varphi(y)-\theta(\alpha(y),\alpha(z))\varphi(x)-D(\alpha(x),\alpha(y))\varphi(z)\nonumber\\
&+\varphi([x,y,z]_{V})-[\varphi(x),\varphi(y),\varphi(z)]_{V},\nonumber
\end{align}
\begin{align}\label{Iam2}
     &\beta \nu(x,y)-\nu(\alpha(x),\alpha(y))\\
     =&\varphi(x) *_V \varphi(y)+\varphi(x *_B y)
     -\mu(\alpha(x))\varphi(y)+\mu(\alpha(y))\varphi(x),\nonumber
\end{align}
\begin{align}\label{Iam3}
     \beta(\theta(x,y)a)-\theta(\alpha(x),\alpha(y))\beta(a)=[\beta(a),\varphi(x),\varphi(y)]_{V},
\end{align}
\begin{align}\label{Iam4}
     \beta D(x,y)a-D(\alpha(x),\alpha(y))\beta(a)=[\varphi(x),\varphi(y),\beta(a)]_{V},
\end{align}
\begin{equation}\label{Iam5}
     \beta \mu(x)a-\mu(\alpha(x))\beta(a)=\beta(a) *_V \varphi(x),
\end{equation}
for all $x,y,z\in B$ and $a\in V$.
\end{thm}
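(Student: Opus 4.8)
The plan is to translate the commutative diagram defining inducibility into algebraic identities by working with the distinguished section $s$ and the non-abelian $(2,3)$-cocycle $(\nu,\omega,\mu,\theta,D)$ it induces. Write any element of $\hat B$ uniquely as $a+s(x)$ with $a\in V$, $x\in B$, so that $\hat B\cong B\oplus_{(\nu,\omega)}V$ via Proposition \ref{BOL}; this is the form in which all computations will be carried out.

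\medskip
\noindent\textbf{Necessity.} Suppose $(\alpha,\beta)$ is inducible by $\gamma\in\mathrm{Aut}_V(\hat B)$. Since $p\gamma=\alpha p$, we have $p(\gamma(s(x)))=\alpha(p(s(x)))=\alpha(x)$, so $\gamma(s(x))$ and $s(\alpha(x))$ differ by an element of $\ker p=V$; define $\varphi:B\to V$ by $\varphi(x)=s(\alpha(x))-\gamma(s(x))$, which is linear because $s,\alpha,\gamma$ are. Also $\gamma i=i\beta$ means $\gamma(a)=\beta(a)$ for $a\in V$, so $\gamma(a+s(x))=\beta(a)+s(\alpha(x))-\varphi(x)$. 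Now apply $\gamma$ to the defining relations \eqref{C0}--\eqref{C4} for the cocycle: for instance, $\gamma([s(x),s(y),s(z)]_{\hat B})=[\gamma s(x),\gamma s(y),\gamma s(z)]_{\hat B}$; expand both sides using $\gamma s(x)=s(\alpha(x))-\varphi(x)$, the definitions \eqref{C1}--\eqref{C3}, and the fact that $\omega(\alpha(x),\alpha(y),\alpha(z))\in V$, then compare the $V$-components. This yields \eqref{Iam1}. The same procedure applied to $\gamma(s(x)*_{\hat B}s(y))$, $\gamma([a,s(x),s(y)]_{\hat B})$, $\gamma([s(x),s(y),a]_{\hat B})$, and $\gamma(s(x)*_{\hat B}a)$ produces \eqref{Iam2}, \eqref{Iam3}, \eqref{Iam4}, \eqref{Iam5} respectively, using that $\gamma$ is a Bol algebra homomorphism and that $\gamma$ restricts to $\beta$ on $V$.

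\medskip
\noindent\textbf{Sufficiency.} Conversely, given a linear $\varphi:B\to V$ satisfying \eqref{Iam1}--\eqref{Iam5}, define $\gamma:\hat B\to\hat B$ on the identified space $B\oplus_{(\nu,\omega)}V$ by $\gamma(a+s(x))=\beta(a)+s(\alpha(x))-\varphi(x)$, i.e. $\gamma(x+a)=\alpha(x)-\varphi(\alpha(x))+\beta(a)$ wait—more carefully, $\gamma(x+a)=\alpha(x)+\beta(a)-\varphi(\alpha(x))$ in coordinates; in any case it is the linear map determined by $\gamma|_V=\beta$ and $\gamma(s(x))=s(\alpha(x))-\varphi(x)$. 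Since $\alpha$ and $\beta$ are bijective and $\varphi$ maps into $V$, a triangularity argument shows $\gamma$ is bijective. That $\gamma$ preserves $*_{\hat B}$ and $[\ ,\ ,\ ]_{\hat B}$ is checked on the spanning set $\{s(x)\}\cup V$: expand $\gamma(s(x)*_{\hat B}s(y))$, $\gamma([s(x),s(y),s(z)]_{\hat B})$, etc.\ using \eqref{C0}--\eqref{C4}, and expand $\gamma(s(x))*_{\hat B}\gamma(s(y))$, $[\gamma s(x),\gamma s(y),\gamma s(z)]_{\hat B}$, etc.\ using bilinearity/trilinearity of the operations together with \eqref{C0}--\eqref{C4} again; equations \eqref{Iam1}--\eqref{Iam5} are exactly what is needed to equate the results. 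Finally $\gamma|_V=\beta$ gives $\gamma i=i\beta$ and $\gamma(V)=V$, and $p\gamma(s(x))=p(s(\alpha(x))-\varphi(x))=\alpha(x)=\alpha p(s(x))$ together with $p\gamma i=0=\alpha p i$ gives $p\gamma=\alpha p$, so $\gamma\in\mathrm{Aut}_V(\hat B)$ induces $(\alpha,\beta)$.

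\medskip
\noindent The main obstacle is bookkeeping in the sufficiency direction: one must verify that $\gamma$ respects all defining Bol-algebra relations, and because the bracket and product on $\hat B$ mix the $V$- and $B$-parts through $\mu,\theta,D,\nu,\omega$ as in \eqref{NLts0}--\eqref{NLts}, each verification involves expanding a sum of six or more terms on each side and matching them against one of \eqref{Iam1}--\eqref{Iam5} after reindexing. There is no conceptual difficulty—every identity is forced—but care is needed with the antisymmetry conventions \eqref{B00} and with the mixed terms such as $[a,b,\nu(x,y)]_V$ appearing in \eqref{B24}. It suffices to record the computation for one representative case of each arity (one product identity, one bracket identity) and note that the remaining cases are analogous.
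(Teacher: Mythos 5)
Your proposal follows essentially the same route as the paper: in the necessity direction you define $\varphi=s\alpha-\gamma s$ and extract \eqref{Iam1}--\eqref{Iam5} by applying the homomorphism $\gamma$ to the cocycle relations \eqref{C0}--\eqref{C4} and comparing $V$-components, and in the sufficiency direction you define $\gamma(a+s(x))=\beta(a)-\varphi(x)+s\alpha(x)$, check bijectivity, and verify the Bol-algebra homomorphism property using \eqref{Iam1}--\eqref{Iam5}, which is exactly the paper's construction. The brief hesitation about whether the coordinate form involves $\varphi(x)$ or $\varphi(\alpha(x))$ is immaterial, since your invariant definition $\gamma|_V=\beta$, $\gamma(s(x))=s(\alpha(x))-\varphi(x)$ is the correct one and agrees with the paper.
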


\begin{proof}
Suppose that the pair $(\alpha,\beta)\in \mathrm{Aut}(B)\times \mathrm{Aut}(V)$ is inducible, meaning there exists an automorphism $\gamma\in \mathrm{Aut}_{V}(\hat{B})$ such
that $\gamma i=i\beta$ and $p\gamma =\alpha p$. Given that $s$ is a section of $p$, for every $x\in B$,
$$p(s\alpha-\gamma s)(x)=\alpha(x)-\alpha(x)=0,$$
which implies that $(s\alpha-\gamma s)(x)\in \mathrm{ker}p=V$.
So we can define a linear map $\varphi:B\longrightarrow
V$ by
$$\varphi(x)=(s\alpha-\gamma s)(x),~~\forall~x\in B.$$
Utilizing Eqs.~(\ref{C2}) and (\ref{C3}), for elements $x,y\in B$ and $a\in V$, we derive the following results:
\begin{eqnarray*}&&
      \beta(D(x,y)a)-D(\alpha(x),\alpha(y))\beta(a)
      \\&=&\beta[s(x),s(y),a]_{\hat{B}}-[s\alpha(x),s\alpha(y),\beta(a)]_{\hat{B}}
      \\&=&[\beta s(x),\beta s(y),\beta(a)]_{\hat{B}}-[s\alpha(x),s\alpha(y),\beta(a)]_{\hat{B}}
      \\&=&[\beta s(x)-s\alpha(x),\beta s(y),\beta(a)]_{\hat{B}}+[s\alpha(x),\beta s(y),\beta(a)]_{\hat{B}}-[s\alpha(x),s\alpha(y),\beta(a)]_{\hat{B}}
      \\&=&[-\varphi(x),\beta s(y),\beta(a)]_{\hat{B}}+[s\alpha(x),-\varphi(y),\beta(a)]_{\hat{B}}
\\&=&-[\varphi(x),\beta s(y)-s\alpha(y),\beta(a)]_{\hat{B}}-[\varphi(x),s\alpha(y),\beta(a)]_{\hat{B}}- [s\alpha(x),\varphi(y),\beta(a)]_{\hat{B}}
\\&=&[\varphi(x),\varphi(y),\beta(a)]_{\hat{B}}-[\varphi(x),s\alpha(y),\beta(a)]_{\hat{B}}+[\beta(a),\varphi(y),s\alpha(x)]_{\hat{B}}
\\&=&[\varphi(x),\varphi(y),\beta(a)]_{V}-\rho(\alpha(x))(\varphi(y),\beta(a))+\rho(\alpha(y))(\varphi(x),\beta(a)),
\end{eqnarray*}
This demonstrates that Eq.~(\ref{Iam4}) is satisfied. By employing the same approach, we can establish that Eqs.~(\ref{Iam1})-(\ref{Iam3}) also hold.

Given that $s$ is a section of $p$, any element $\hat{w}\in \hat{B}$ can be uniquely decomposed as $\hat{w}=a+s(x)$, where $a\in V$ and $x\in B$. Suppose there exists a linear map $\varphi:B\longrightarrow V$ hat satisfies Eqs. (\ref{Iam1})-(\ref{Iam5}). Under these conditions, we define a linear map $\gamma:\hat{B}\longrightarrow
\hat{B}$ by
$$\gamma(\hat{w})=\gamma(a+s(x))=\beta(a)-\varphi(x)+s\alpha(x).$$

First, we show that $\gamma$ is bijective. Suppose $\gamma(\hat{w})=0$. Then we have $s\alpha(x)=0$ and
$\beta(a)-\varphi(x)=0$. Since $s$ and $\alpha$ are injective, it follows that $x=0$, and consequently,  $a=0$. Therefore, $\hat{w}=a+s(x)=0$, proving that $\gamma$ is injective. For any $\hat{w}=a+s(x)\in \hat{B}$,
$$\gamma(\beta^{-1}(a)+\beta^{-1}\varphi\alpha^{-1}(x)+s\alpha^{-1}(x))=a+s(x)=\hat{w},$$
which yields that $\gamma$ is surjective. In all, $\gamma$ is
bijective.

To further substantiate our argument, we next demonstrate that $\gamma$ serves as a homomorphism for the Bol algebra $\hat{B}$. Specifically, for any elements $\hat{w}_i=a_i+s(x_i)\in \hat{B}$ where $i=1,2,3$,
\begin{eqnarray*}&&
      [\gamma(\hat{w}_1),\gamma(\hat{w}_2),\gamma(\hat{w}_3)]_{\hat{B}}
\\&=&[\beta(a_1),\beta(a_2),\beta(a_3)]_{\hat{B}}-[\beta(a_1),\beta(a_2),\varphi(x_3)]_{\hat{B}}-[\beta(a_1),\varphi(x_2),\beta(a_3)]_{\hat{B}}
\\&&+[\beta(a_1),\varphi(x_2),\varphi(x_3)]_{\hat{B}}+\theta(\alpha(x_2),\alpha(x_3))\beta(a_1)-[\varphi(x_1),\beta(a_2),\beta(a_3)]_{\hat{B}}
\\&&+[\varphi(x_1),\beta(a_2),\varphi(x_3)]_{\hat{B}}+[\varphi(x_1),\varphi(x_2),\beta(a_3)]_{\hat{B}}-[\varphi(x_1),\varphi(x_2),\varphi(x_3)]_{\hat{B}}
\\&&-\theta(\alpha(x_2),\alpha(x_3))\varphi(x_1)-\theta(\alpha(x_1),\alpha(x_3))\beta(a_2)+\theta(\alpha(x_1),\alpha(x_3))\varphi(x_2)
\\&&+D(\alpha(x_1),\alpha(x_2))\beta(a_3)-D(\alpha(x_1),\alpha(x_2))\varphi(x_3)+\omega(\alpha(x_1),\alpha(x_2),\alpha(x_3))\\
&&+s\alpha[x_1,x_2,x_3]_{B}
\end{eqnarray*}
and
\begin{eqnarray*}&&
      \gamma([\hat{w}_1,\hat{w}_2,\hat{w}_3]_{\hat{B}})
\\&=&[\beta(a_1),\beta(a_2),\beta(a_3)]_{\hat{B}} +\theta(x_2,x_3)a_1-\theta(x_1,x_3)a_2+D(x_1,x_2)a_3
\\&&+\beta([s(x_1), s(x_2),s(x_3)]_{\hat{B}})+\beta \omega(x_1,x_2,x_3)+s\alpha [x_1,x_2,x_3]_{B}-\varphi([x_1,x_2,x_3]_{B}).
\end{eqnarray*}
Thanks to Eqs.~~(\ref{Iam1})-(\ref{Iam5}), we have
$$ \gamma([\hat{w}_1,\hat{w}_2,\hat{w}_3]_{\hat{B}})=[\gamma(\hat{w}_1),\gamma(\hat{w}_2),\gamma(\hat{w}_3)]_{\hat{B}}.$$
Similarly, it can be shown that $ \gamma((\hat{w}_1)*_{\hat{B}}(\hat{w}_2))=\gamma(\hat{w}_1) *_{\hat{B}} \gamma(\hat{w}_2).$ Consequently, $\gamma$ qualifies as an element of $\mathrm{Aut}_{V}(\hat{B})$. This concludes the proof.
\end{proof}

For all $(\alpha,\beta)\in \mathrm{Aut}(B)\times \mathrm{Aut}(V)$, we define the maps  $\nu_{(\alpha,\beta)}:B\otimes
B\longrightarrow V,~\omega_{(\alpha,\beta)}:B\otimes
B\otimes B\longrightarrow V,~\mu_{(\alpha,\beta)}:B\longrightarrow \mathfrak{gl}(V),~\theta_{(\alpha,\beta)},
D_{(\alpha,\beta)}:B\wedge B\longrightarrow \mathfrak{gl}(V)$ based on the non-abelian (2,3)-cocycle $(\nu,\omega,\mu,\theta,D)$ induced by the section $s$ of $p$ in the non-abelian extension $\mathcal{E}:0\longrightarrow V\stackrel{i}{\longrightarrow}
\hat{B}\stackrel{p}{\longrightarrow}B\longrightarrow0$. Defined as follows:
 \begin{align}\label{Inc1}&\omega_{(\alpha,\beta)}(x,y,z)=\beta\omega(\alpha^{-1}(x),\alpha^{-1}(y),\alpha^{-1}(z)),\\~~
 &\nu_{(\alpha,\beta)}(x,y)=\beta\nu(\alpha^{-1}(x),\alpha^{-1}(y)),\nonumber\end{align}
 \begin{align}\label{Inc2}&\theta_{(\alpha,\beta)}(x,y)a=\beta(\theta(\alpha^{-1}(x),\alpha^{-1}(y))\beta^{-1}(a))
,\\~~&D_{(\alpha,\beta)}(x,y)a=\beta D(\alpha^{-1}(x),\alpha^{-1}(y))\beta^{-1}(a),\nonumber\end{align}
\begin{equation}\label{Inc3}\mu_{(\alpha,\beta)}(x)a=\beta\mu(\alpha^{-1}(x))\beta^{-1}(a),\end{equation}
for all $x,y,z\in B,a,b\in V.$

\begin{pro}Utilizing the aforementioned notations, the tuple $(\nu,\omega,\mu,\theta,D)_{(\alpha,\beta)}$ constitutes a non-abelian (2,3)-cocycle.
\end{pro}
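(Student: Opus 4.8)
The plan is to avoid verifying each of the non-abelian $(2,3)$-cocycle identities \eqref{B00}--\eqref{B1} by hand for the conjugated tuple, and instead to invoke Proposition \ref{BOL}, which converts the cocycle condition into the statement that a binary-ternary structure on $B\oplus V$ is a Bol algebra. Since $(\nu,\omega,\mu,\theta,D)$ is a non-abelian $(2,3)$-cocycle (it was induced by a section $s$ of a genuine extension), Proposition \ref{BOL} tells us that $B\oplus_{(\nu,\omega)}V$ is a Bol algebra. It therefore suffices to exhibit a linear isomorphism from $B\oplus_{(\nu,\omega)}V$ onto the space $B\oplus V$ equipped with the operations $*_{\nu_{(\alpha,\beta)}}$ and $[\ ,\ ,\ ]_{\omega_{(\alpha,\beta)}}$ built from the conjugated tuple: transporting a Bol algebra structure along a linear isomorphism again yields a Bol algebra, and a second application of Proposition \ref{BOL} (in the reverse direction) then gives that $(\nu,\omega,\mu,\theta,D)_{(\alpha,\beta)}$ is a non-abelian $(2,3)$-cocycle.

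First I would define $\Phi:B\oplus V\to B\oplus V$ by $\Phi(x+a)=\alpha(x)+\beta(a)$ for $x\in B$, $a\in V$. As $\alpha\in\mathrm{Aut}(B)$ and $\beta\in\mathrm{Aut}(V)$ are bijective, $\Phi$ is a linear isomorphism with inverse $x+a\mapsto\alpha^{-1}(x)+\beta^{-1}(a)$. Next I would check that $\Phi$ intertwines the binary operations, that is, $\Phi\big((x+a)*_{\nu}(y+b)\big)=\Phi(x+a)*_{\nu_{(\alpha,\beta)}}\Phi(y+b)$. Expanding the left-hand side via \eqref{NLts0} and pushing $\Phi$ inside, the term $\alpha(x*_B y)$ equals $\alpha(x)*_B\alpha(y)$ and $\beta(a*_V b)$ equals $\beta(a)*_V\beta(b)$ because $\alpha,\beta$ are homomorphisms, while the remaining terms $\beta\nu(x,y)$, $\beta\mu(x)b$ and $\beta\mu(y)a$ match $\nu_{(\alpha,\beta)}(\alpha(x),\alpha(y))$, $\mu_{(\alpha,\beta)}(\alpha(x))\beta(b)$ and $\mu_{(\alpha,\beta)}(\alpha(y))\beta(a)$ by the defining formulas \eqref{Inc1} and \eqref{Inc3} together with $\alpha^{-1}\alpha=\mathrm{id}$ and $\beta^{-1}\beta=\mathrm{id}$. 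An entirely parallel computation, using \eqref{NLts}, \eqref{Inc1}, \eqref{Inc2} and the fact that $\alpha,\beta$ intertwine the ternary brackets, shows $\Phi\big([x+a,y+b,z+c]_{\omega}\big)=[\Phi(x+a),\Phi(y+b),\Phi(z+c)]_{\omega_{(\alpha,\beta)}}$.

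Having shown that $\Phi$ is a linear isomorphism carrying $(B\oplus_{(\nu,\omega)}V,*_{\nu},[\ ,\ ,\ ]_{\omega})$ to $(B\oplus V,*_{\nu_{(\alpha,\beta)}},[\ ,\ ,\ ]_{\omega_{(\alpha,\beta)}})$, the latter is a Bol algebra, and Proposition \ref{BOL} then yields that $(\nu,\omega,\mu,\theta,D)_{(\alpha,\beta)}$ is a non-abelian $(2,3)$-cocycle. The only step demanding attention is the intertwining check for the ternary bracket, where one tracks six summands and confirms each matches after conjugation; this is routine bookkeeping with no genuine obstacle, since every needed identity is either the definition of a conjugated structure map \eqref{Inc1}--\eqref{Inc3} or the statement that $\alpha,\beta$ are Bol algebra automorphisms. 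As an alternative route, one could substitute \eqref{Inc1}--\eqref{Inc3} directly into each of \eqref{B00}--\eqref{B1} and cancel the occurrences of $\beta$, $\beta^{-1}$, $\alpha$, $\alpha^{-1}$ using the corresponding original identity; this is correct but considerably longer, so the isomorphism argument is preferable.
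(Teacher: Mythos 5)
Your proposal is correct, but it proves the statement by a genuinely different route than the paper. The paper's proof is a direct verification: it substitutes the defining formulas \eqref{Inc1}--\eqref{Inc3} into one of the cocycle identities (it writes out \eqref{B2} for the conjugated maps, using the identity \eqref{B2} for the original tuple together with $\alpha^{-1}[x,y,z]_B=[\alpha^{-1}(x),\alpha^{-1}(y),\alpha^{-1}(z)]_B$) and then asserts that the remaining identities \eqref{B12}--\eqref{B4} follow ``by the same token.'' You instead transport structure: you observe that $\Phi=\alpha\oplus\beta$ is a linear bijection intertwining $(*_{\nu},[\ ,\ ,\ ]_{\omega})$ with $(*_{\nu_{(\alpha,\beta)}},[\ ,\ ,\ ]_{\omega_{(\alpha,\beta)}})$ --- the intertwining check uses exactly the definitions \eqref{Inc1}--\eqref{Inc3} and the fact that $\alpha,\beta$ are Bol algebra automorphisms --- so the conjugated operations define a Bol algebra on $B\oplus V$, and Proposition \ref{BOL} applied in the converse direction gives that $(\nu,\omega,\mu,\theta,D)_{(\alpha,\beta)}$ is a non-abelian $(2,3)$-cocycle. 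Your argument is shorter and more conceptual: it replaces a long list of identity checks (which the paper only samples) by a single two-line intertwining computation, at the cost of relying on the full ``if and only if'' strength of Proposition \ref{BOL}, which the paper states but does not prove in detail; the paper's computational route is self-contained in that respect, though more laborious. Both are valid proofs of the proposition.
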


\begin{proof}
By Eqs.~(\ref{B2}) and (\ref{Inc2}), for all $x_1,x_2,y_1,y_2,y_3\in B$, we get
\begin{eqnarray*}&&D_{(\alpha,\beta)}(x,y)\theta_{(\alpha,\beta)}(z,w)a-\theta_{(\alpha,\beta)}(z,w)D_{(\alpha,\beta)}(x,y)a
\\&=&D_{(\alpha,\beta)}(x,y)\beta(\theta(\alpha^{-1}(z),\alpha^{-1}(w))\beta^{-1}(a))-\theta_{(\alpha,\beta)}(z,w)\beta D(\alpha^{-1}(x),\alpha^{-1}(y))\beta^{-1}(a)
\\&=&\beta(\theta(\alpha^{-1}([x,y,z]_B),\alpha^{-1}(w))\beta^{-1}(a))+\beta(\theta(\alpha^{-1}(z),\alpha^{-1}([x,y,w]_B))\beta^{-1}(a))
\\&=&\theta_{(\alpha,\beta)}([x,y,z]_B,w)a+\theta_{(\alpha,\beta)}(z,[x,y,w]_B)a
 \end{eqnarray*}
 This indicates that Eq.~(\ref{B2}) is valid. By the same token, we can verify that Eqs.~(\ref{B12})-(\ref{B4}) also hold true. Thus, the proof is concluded.
\end{proof}

\begin{thm} \label{Eth1} Consider a non-abelian extension of the Bol algebra $B$ by $V$, given by $0\longrightarrow V\stackrel{i}{\longrightarrow}
\hat{B}\stackrel{p}{\longrightarrow}B\longrightarrow0$, which includes a section $s$ of $p$. Let $(\nu,\omega,\mu,\theta,D)$ denote the non-abelian (2,3)-cocycle induced by $s$. For a pair\((\alpha,\beta)\in\mathrm{Aut}(B)\times\mathrm{Aut}(V)\)to be inducible,it is necessary and sufficient that the non-abelian (2,3)-cocycles $(\nu,\omega,\mu,\theta,D)$ and
$(\nu,\omega,\mu,\theta,D)_{(\alpha,\beta)}$ are equivalent.
\end{thm}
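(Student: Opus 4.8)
The plan is to deduce Theorem~\ref{Eth1} directly from Theorem~\ref{EC}, by recognising the inducibility criterion \eqref{Iam1}--\eqref{Iam5} as nothing more than a relabelled copy of the cocycle--equivalence conditions \eqref{E1}--\eqref{E5}, with the auxiliary linear map transported along $\alpha$.

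First I would record the two ``abstract'' reformulations. By Theorem~\ref{EC}, the pair $(\alpha,\beta)$ is inducible if and only if there exists $\varphi\in\mathrm{Hom}(B,V)$ satisfying \eqref{Iam1}--\eqref{Iam5}. By the definition of equivalence of non-abelian $(2,3)$-cocycles, the cocycles $(\nu,\omega,\mu,\theta,D)$ and $(\nu,\omega,\mu,\theta,D)_{(\alpha,\beta)}$ are equivalent if and only if there exists $\psi\in\mathrm{Hom}(B,V)$ satisfying \eqref{E1}--\eqref{E5} with $(\nu,\omega,\mu,\theta,D)_{(\alpha,\beta)}$ in the role of the first cocycle and $(\nu,\omega,\mu,\theta,D)$ in the role of the second. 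Keeping this ordering fixed is essential, since \eqref{E1}--\eqref{E5} are not symmetric in the two cocycles.

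Next I would set up the dictionary $\psi=\varphi\circ\alpha^{-1}$ (equivalently $\varphi=\psi\circ\alpha$), which is a bijection of $\mathrm{Hom}(B,V)$ because $\alpha\in\mathrm{Aut}(B)$, and check that under it each of the five equations \eqref{E1}--\eqref{E5} becomes the corresponding one among \eqref{Iam1}--\eqref{Iam5}. As a representative case: inserting $D_{(\alpha,\beta)}(x,y)a=\beta D(\alpha^{-1}(x),\alpha^{-1}(y))\beta^{-1}(a)$ from \eqref{Inc2} into \eqref{E5} and then substituting $x\mapsto\alpha(x)$, $y\mapsto\alpha(y)$, $a\mapsto\beta(a)$ turns \eqref{E5} into $\beta D(x,y)a-D(\alpha(x),\alpha(y))\beta(a)=[\psi(\alpha(x)),\psi(\alpha(y)),\beta(a)]_V$, which is exactly \eqref{Iam4} once $\varphi=\psi\circ\alpha$; since the substitution is a bijective relabelling of variables, the implication runs in both directions. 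The same manipulation, using \eqref{Inc1}, \eqref{Inc3} and the fact that $\alpha$ is an algebra automorphism of $B$ (so that $\alpha(x)*_B\alpha(y)=\alpha(x*_B y)$ and $[\alpha(x),\alpha(y),\alpha(z)]_B=\alpha[x,y,z]_B$, whence $\psi(\alpha(x)*_B\alpha(y))=\varphi(x*_B y)$ and $\psi(\alpha[x,y,z]_B)=\varphi([x,y,z]_B)$), carries \eqref{E4} to \eqref{Iam3}, \eqref{E3} to \eqref{Iam5}, \eqref{E2} to \eqref{Iam2}, and \eqref{E1} to \eqref{Iam1}. Each step is a purely mechanical substitution of $\alpha^{\pm1}$ into the $B$-slots and of $\beta^{\pm1}$ into the $V$-slots, so I would display only one or two of them and note that the remaining ones are identical in form.

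Finally I would assemble the chain of equivalences: $(\alpha,\beta)$ is inducible $\Longleftrightarrow$ there is $\varphi$ with \eqref{Iam1}--\eqref{Iam5} $\Longleftrightarrow$ there is $\psi=\varphi\circ\alpha^{-1}$ with \eqref{E1}--\eqref{E5} $\Longleftrightarrow$ $(\nu,\omega,\mu,\theta,D)$ and $(\nu,\omega,\mu,\theta,D)_{(\alpha,\beta)}$ are equivalent non-abelian $(2,3)$-cocycles. The only genuinely delicate point I anticipate is bookkeeping: keeping the first/second cocycle roles fixed throughout (because of the asymmetry of \eqref{E1}--\eqref{E5} in their arguments) and performing the $\alpha$- and $\beta$-substitutions uniformly across all five identities; there is also a harmless typo in \eqref{Iam1}, where $[x,y,z]_V$ is to be read as $[x,y,z]_B$. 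No computation beyond that already carried out in Theorem~\ref{EC} and in the definitions \eqref{Inc1}--\eqref{Inc3} is required.
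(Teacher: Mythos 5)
Your proposal is correct and follows essentially the same route as the paper: both reduce the statement to Theorem~\ref{EC} and identify \eqref{Iam1}--\eqref{Iam5} with \eqref{E1}--\eqref{E5} (twisted cocycle first, original second) via the map $\psi=\varphi\alpha^{-1}$ and the substitution $x\mapsto\alpha(x)$, $a\mapsto\beta(a)$, exactly as in the paper's representative computation for \eqref{Iam4} versus \eqref{E5}. Your explicit bidirectional ``dictionary'' phrasing merely makes precise what the paper leaves as ``the reverse implication can be established in a similar manner.''
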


\begin{proof}
Assume that $(\alpha,\beta)\in \mathrm{Aut}(B)\times \mathrm{Aut}(V)$ is inducible. According to Theorem ~\ref{EC}, there exists a linear map $\varphi:B\longrightarrow V$ that satisfies Eqs.~~(\ref{Iam1})-(\ref{Iam5}). For any $x,y\in B$ and $a\in V$, there exist $x_0,y_0\in B$ and $a_0\in V$ such that $x=\alpha(x_0),y=\alpha(y_0),a=\beta(a_0)$. Utilizing Eqs.~(\ref{Iam4}) and (\ref{Inc3}), we derive:
\begin{eqnarray*}&&
D_{(\alpha,\beta)}(x,y)a-D(x,y)a\\&=&\beta(D(\alpha^{-1}(x),\alpha^{-1}(y))\beta^{-1}(a))
-D(x,y)a
\\&=& \beta(D(x_0,y_0)a_0)-D(\alpha(x_0),\alpha(y_0))\beta(a_0)
\\&=&[\varphi(x_0),\varphi(y_0),\beta(a_0)]_{V}
\\&=&[\varphi\alpha^{-1}(x),\varphi\alpha^{-1}(y),a]_{V},
 \end{eqnarray*}

which demonstrates that  Eq.~(\ref{E5}) is satisfied. Similarly, Eqs.~(\ref{E1})-(\ref{E4}) can be shown to hold. Therefore, the non-abelian (2,3)-cocycles$(\nu,\omega,\mu,\theta,D)$ and
$(\nu,\omega,\mu,\theta,D)_{(\alpha,\beta)}$ are equivalent through the linear map $\varphi\alpha^{-1}:B\longrightarrow V$. The reverse implication can be established in a similar manner.

\end{proof}

%%%%%%%%%%%%%%%%%%%%%%%%%%%%%%%%%%%%%%%%%%%%%%%%%%%%%%%%%%%%%%%%%%%%%%%%%%%%%%%%%%%%%%%%%%%%%%%%%%%%%%%%%%
\section{Wells exact sequences}
In this section, we consider the Wells map associated with non-abelian extensions of Bol algebras.
Let
\begin{equation*}\mathcal{E}:0\longrightarrow V\stackrel{i}{\longrightarrow}
\hat{B}\stackrel{p}{\longrightarrow}B\longrightarrow0
\end{equation*}
be a non-abelian extension of $B$ by $V$ with a section $s$ of $p$ and
$(\nu,\omega,\mu,\theta,D)$ be the corresponding non-abelian (2,3)-cocycle
induced by $s$.
Define a map $\mathcal{W}:\mathrm{Aut}(B)\times \mathrm{Aut}(V)\longrightarrow H^{(2,3)}_{nab}(B,V)$ by
\begin{equation}\label{W1}
	\mathcal{W}(\alpha,\beta)=[(\nu,\omega,\mu,\theta,D)_{(\alpha,\beta)}
-(\nu,\omega,\mu,\theta,D)].
\end{equation}
The map $\mathcal {W}$ is called the Wells map associated with $\mathcal{E}$.
 \begin{pro} \label{Wm1}
	The Wells map $\mathcal{W}$ does not depend on the choice of sections.
 \end{pro}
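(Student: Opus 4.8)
The plan is to show that changing the section only alters the induced non-abelian $(2,3)$-cocycle by a $(2,3)$-coboundary, so the difference $(\nu,\omega,\mu,\theta,D)_{(\alpha,\beta)} - (\nu,\omega,\mu,\theta,D)$ is unchanged modulo coboundaries, i.e.\ its class in $H^{(2,3)}_{nab}(B,V)$ is well-defined. First I would fix two sections $s$ and $s'$ of $p$, set $\varphi_0(x) = s(x) - s'(x) \in V$ as in the proof of Proposition \ref{CB}, and recall from that proof that the associated cocycles $(\nu_s,\omega_s,\mu_s,\theta_s,D_s)$ and $(\nu_{s'},\omega_{s'},\mu_{s'},\theta_{s'},D_{s'})$ are equivalent via $\varphi_0$, meaning they satisfy Eqs.~(\ref{E1})--(\ref{E5}) with $\varphi = \varphi_0$. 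In particular the difference $(\nu_{s'},\omega_{s'},\mu_{s'},\theta_{s'},D_{s'}) - (\nu_s,\omega_s,\mu_s,\theta_s,D_s)$ is exactly a $(2,3)$-coboundary determined by $\varphi_0$.

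Next I would apply the twisting construction of Eqs.~(\ref{Inc1})--(\ref{Inc3}) to both sections. Since the operators $\omega_{(\alpha,\beta)}$, $\nu_{(\alpha,\beta)}$, $\theta_{(\alpha,\beta)}$, $D_{(\alpha,\beta)}$, $\mu_{(\alpha,\beta)}$ are obtained from the corresponding operators by conjugation with $\alpha$ and $\beta$, and since this conjugation is linear in the input cocycle, the difference between the $s'$-twisted cocycle and the $s$-twisted cocycle equals the $(\alpha,\beta)$-twist of the coboundary $(\nu_{s'},\ldots) - (\nu_s,\ldots)$. The key computation is therefore to check that twisting a $(2,3)$-coboundary by $(\alpha,\beta)$ yields again a $(2,3)$-coboundary: concretely, if the coboundary is generated by $\varphi_0 : B \to V$ via Eqs.~(\ref{E1})--(\ref{E5}), then its $(\alpha,\beta)$-twist is generated by $\varphi_1 := \beta \circ \varphi_0 \circ \alpha^{-1} : B \to V$. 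This is a direct substitution: replace $x,y,z$ by $\alpha^{-1}(x),\alpha^{-1}(y),\alpha^{-1}(z)$ and $a$ by $\beta^{-1}(a)$ in each of (\ref{E1})--(\ref{E5}), multiply through by $\beta$ on the outside, and use that $\beta$ is a Bol algebra automorphism of $V$ (so it commutes with $*_V$ and $[\,\cdot\,,\,\cdot\,,\,\cdot\,]_V$) together with the definitions (\ref{Inc1})--(\ref{Inc3}).

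Putting these together: the two possible values of $\mathcal{W}(\alpha,\beta)$ computed from $s$ and $s'$ are
\begin{align*}
&\big[(\nu,\omega,\mu,\theta,D)^s_{(\alpha,\beta)} - (\nu,\omega,\mu,\theta,D)^s\big] \quad\text{and}\\
&\big[(\nu,\omega,\mu,\theta,D)^{s'}_{(\alpha,\beta)} - (\nu,\omega,\mu,\theta,D)^{s'}\big],
\end{align*}
and their difference is
\[
\big[(\nu,\omega,\mu,\theta,D)^{s'}_{(\alpha,\beta)} - (\nu,\omega,\mu,\theta,D)^{s}_{(\alpha,\beta)}\big] - \big[(\nu,\omega,\mu,\theta,D)^{s'} - (\nu,\omega,\mu,\theta,D)^{s}\big],
\]
which by the above is the class of (a coboundary generated by $\varphi_1$) minus (a coboundary generated by $\varphi_0$) — both zero in $H^{(2,3)}_{nab}(B,V)$. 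Hence $\mathcal{W}(\alpha,\beta)$ is independent of the section. The main obstacle I anticipate is purely bookkeeping: verifying that conjugation by $(\alpha,\beta)$ sends the coboundary relations (\ref{E1})--(\ref{E5}) for $\varphi_0$ to those for $\varphi_1 = \beta\varphi_0\alpha^{-1}$, term by term, being careful that the $\mu_{(\alpha,\beta)}, \theta_{(\alpha,\beta)}, D_{(\alpha,\beta)}$ appearing on the right-hand sides are the \emph{twisted} operators and not the original ones; no conceptual difficulty arises beyond this.
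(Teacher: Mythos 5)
Your core computation coincides with the paper's: setting $\varphi_0=s-s'$, the cocycles induced by the two sections are equivalent via $\varphi_0$ (this is the last part of Proposition~\ref{CB}), and substituting $\alpha^{-1}(x),\alpha^{-1}(y),\alpha^{-1}(z),\beta^{-1}(a)$ into Eqs.~(\ref{E1})--(\ref{E5}) and applying $\beta$, together with the definitions (\ref{Inc1})--(\ref{Inc3}), shows the two twisted cocycles are equivalent via $\psi=\beta\varphi_0\alpha^{-1}$. That is exactly the verification carried out in the paper (done explicitly for $D$, with the other identities stated analogously), so the substance of your argument is the paper's argument.

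The weak point is your concluding step, which is phrased in abelian-cohomology language that is not available in this setting. For non-abelian $(2,3)$-cocycles the paper defines no coboundaries: the relation between equivalent cocycles, Eqs.~(\ref{E1})--(\ref{E5}), has right-hand sides containing terms quadratic and cubic in $\varphi$ (namely $\varphi(x)*_V\varphi(y)$ and $[\varphi(x),\varphi(y),\varphi(z)]_V$) and involving the structure maps $\mu_2,\theta_2,D_2$ of the target cocycle, so ``being equivalent'' is not ``differing by a coboundary''; moreover $H^{(2,3)}_{nab}(B,V)$ is only a set of equivalence classes, so ``the difference of the two classes'' and ``both zero in $H^{(2,3)}_{nab}(B,V)$'' are not defined objects. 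The repair is immediate from what you have already established: since the $s'$-cocycle is equivalent to the $s$-cocycle via $\varphi_0$, and the twisted $s'$-cocycle is equivalent to the twisted $s$-cocycle via $\beta\varphi_0\alpha^{-1}$, the two difference septuples entering the definition (\ref{W1}) of $\mathcal{W}(\alpha,\beta)$ are equivalent via the single linear map $\beta\varphi_0\alpha^{-1}-\varphi_0$, hence determine the same class; this is precisely how the paper concludes, and you should state the conclusion in this form rather than by subtracting ``coboundaries''.
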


\begin{proof}
Given any $x,y,z\in B$, there exist elements $x_0,y_0,z_0\in B$ such that $x=\alpha(x_0),y=\alpha(y_0)$ and $z=\beta(z_0)$. Suppose that $(\nu',\omega',\mu',\theta',D{'}) $ is another non-abelian (2,3)-cocycle associated with the non-abelian extension $\mathcal{E}$. We know that this cocycle is equivalent to $(\nu,\omega,\mu,\theta,D)$ through a linear map $\varphi:B\to V$. According to Eqs.~(\ref{E5}) and (\ref{Inc1})-(\ref{Inc2}), we define $\psi=\beta\varphi\alpha^{-1}$, which leads to
	\begin{align*}
		&D{'}_{(\alpha,\beta)}(x,y)z-D_{(\alpha,\beta)}(x,y)z
\\=&\beta D{'}(\alpha^{-1}(x),\alpha^{-1}(y))\beta^{-1}(z)-\beta D(\alpha^{-1}(x),\alpha^{-1}(y))\beta^{-1}(z)\\
		=&\beta D{'}(x_0,y_0)z_0-\beta D(x_0,y_0)z_0\\
		=& \beta ([\varphi(x_0),\varphi(y_0),z_0]_V)
\\=& \beta ([\beta^{-1}\psi(x),\beta^{-1}\psi(y),\alpha^{-1}(z)]_V)
\\ =&[\psi(x),\psi(y),z]_{V}.\end{align*}
By the same token,
\begin{align*}
\omega{'}_{(\alpha,\beta)}(x,y,a)-\omega_{(\alpha,\beta)}(x,y,a)&= \theta_{(\alpha,\beta)}(x,z)\psi(a)-D_{(\alpha,\beta)}(x,y)\psi(a)-\theta_{(\alpha,\beta)}(y, a)\psi(x)\\
 &\quad-[\psi(x),\psi(y),\psi(a)]_V+\psi([x, y,z]_B),\\
\nu{'}_{(\alpha,\beta)}(x,y)-\nu_{(\alpha,\beta)}(x,y)&=\psi(x) *_V \psi(y)+\psi(x *_B y) -\mu_{(\alpha,\beta)}(x)\psi(y)\\
 &\quad+\mu_{(\alpha,\beta)}(y)\psi(x),\\
\theta{'}_{(\alpha,\beta)}(x,y)a-\theta_{(\alpha,\beta)}(x,y)a&=[a,\psi(x),\psi(y)]_{V}, \\
 \mu{'}_{(\alpha,\beta)}(x)a-\mu_{(\alpha,\beta)}(x)a&=a *_V \psi(x).
 \end{align*}
So, $(\nu',\omega',\mu',\theta',D{'})_{(\alpha,\beta)}$ and $(\nu,\omega,\mu,\theta,D)_{(\alpha,\beta)}$ are equivalent non-abelian (2,3)-cocycles via the linear map $\psi=\beta\varphi\alpha^{-1}$.
Combining the results of the last two paragraphs, we know that
 \begin{align*}(\nu',\omega',\mu',\theta',D{'})_{(\alpha,\beta)}-(\nu',\omega',\mu',\theta',D{'})\end{align*}
 and \begin{align*} (\nu,\omega,\mu,\theta,D)_{(\alpha,\beta)}-(\nu,\omega,\mu,\theta,D)\end{align*}
 are equivalent via the linear map $\beta\varphi\alpha^{-1}-\varphi$.
\end{proof}

  \begin{pro} \label{Wm2} Let
$\mathcal{E}:0\longrightarrow V\stackrel{i}{\longrightarrow}
\hat{B}\stackrel{p}{\longrightarrow}B\longrightarrow0$
be a non-abelian extension of $B$ by $V$  with a section $s$ of $p$. Define a map
\begin{equation}\label{W3}{\mathcal{K}}:\mathrm{Aut}_{V}(\hat{B})\longrightarrow \mathrm{Aut}(B)\times \mathrm{Aut}(B),~~{\mathcal{K}}(\gamma)=(p\gamma s,\gamma|_{V}),~\forall~\gamma\in \mathrm{Aut}_{V}(\hat{B}).\end{equation}
 Then ${\mathcal{K}}$ is a homomorphism of groups.
 \end{pro}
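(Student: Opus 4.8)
The plan is to first check that $\mathcal{K}$ is well-defined and then to verify the homomorphism property componentwise. The conceptual point behind everything is that any $\gamma\in\mathrm{Aut}_{V}(\hat{B})$ preserves $V=\ker p$, so it descends to a linear automorphism $\bar{\gamma}$ of the quotient $\hat{B}/V$; transporting $\bar{\gamma}$ along the Bol-algebra isomorphism $\hat{B}/V\cong B$ induced by $p$ yields a linear automorphism of $B$, and unwinding the definitions gives the identity $p\gamma=\bar{\gamma}\,p$. Evaluating this at $s(x)$ and using $ps=\mathrm{id}_{B}$ shows $\bar{\gamma}=p\gamma s$, so the first slot of $\mathcal{K}(\gamma)$ is exactly this descended automorphism; I would record the identity $p\gamma=(p\gamma s)\,p$ at the outset, since it drives the remaining steps.

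For well-definedness of the first component I would verify directly that $p\gamma s$ respects the Bol operations. For $x,y\in B$,
\[(p\gamma s)(x)*_{B}(p\gamma s)(y)=p\bigl(\gamma s(x)*_{\hat{B}}\gamma s(y)\bigr)=p\gamma\bigl(s(x)*_{\hat{B}}s(y)\bigr)=p\gamma\bigl(s(x*_{B}y)+\nu_{s}(x,y)\bigr),\]
and since $\nu_{s}(x,y)\in V$ while $\gamma(V)=V=\ker p$, the last summand dies and we are left with $(p\gamma s)(x*_{B}y)$; the same computation using $[s(x),s(y),s(z)]_{\hat{B}}=s[x,y,z]_{B}+\omega_{s}(x,y,z)$ handles the ternary bracket. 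Bijectivity is automatic: $\gamma^{-1}\in\mathrm{Aut}_{V}(\hat{B})$ as well, and combining the identity $p\gamma=(p\gamma s)p$ with its analogue for $\gamma^{-1}$ and with $ps=\mathrm{id}_{B}$ gives $(p\gamma s)(p\gamma^{-1}s)=(p\gamma^{-1}s)(p\gamma s)=\mathrm{id}_{B}$. For the second component, $\gamma|_{V}$ is the restriction of a Bol-algebra homomorphism to the subalgebra $V$, hence a homomorphism, and it is invertible with inverse $\gamma^{-1}|_{V}$ because $\gamma(V)=V$; thus $\gamma|_{V}\in\mathrm{Aut}(V)$.

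Finally, for the homomorphism property, take $\gamma_{1},\gamma_{2}\in\mathrm{Aut}_{V}(\hat{B})$ and write $\mathcal{K}(\gamma_{i})=(\alpha_{i},\beta_{i})$ with $\alpha_{i}=p\gamma_{i}s$ and $\beta_{i}=\gamma_{i}|_{V}$. The second component is immediate, since restricting a composite gives $(\gamma_{1}\gamma_{2})|_{V}=(\gamma_{1}|_{V})(\gamma_{2}|_{V})$ (using $\gamma_{2}(V)=V$). For the first component I would apply $p\gamma_{1}=\alpha_{1}p$ to get
\[p(\gamma_{1}\gamma_{2})s=\alpha_{1}\,p\gamma_{2}s=\alpha_{1}\alpha_{2}(ps)=\alpha_{1}\alpha_{2},\]
so $\mathcal{K}(\gamma_{1}\gamma_{2})=(\alpha_{1}\alpha_{2},\beta_{1}\beta_{2})=\mathcal{K}(\gamma_{1})\mathcal{K}(\gamma_{2})$. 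The only mildly delicate step is the well-definedness of $p\gamma s$ — in particular that it is multiplicative even though the section $s$ is not — but this is settled entirely by $\gamma(\ker p)=\ker p$ together with the formulas \eqref{C0} and \eqref{C1} defining $\nu_{s}$ and $\omega_{s}$; no appeal to the non-abelian cocycle identities \eqref{B00}–\eqref{B1} is required.
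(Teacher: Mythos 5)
Your proof is correct, and it is exactly the direct computation that the paper leaves implicit (the paper's proof is just the remark ``This is by direct computations''): the key identity $p\gamma=(p\gamma s)p$, the use of $\gamma(V)=V=\ker p$ to make $p\gamma s$ multiplicative despite $s$ not being a homomorphism, and the componentwise check $\mathcal{K}(\gamma_1\gamma_2)=\mathcal{K}(\gamma_1)\mathcal{K}(\gamma_2)$ are precisely the omitted details. No gap; your filled-in argument (including the observation that only \eqref{C0}--\eqref{C1} and not the cocycle identities are needed) is what the paper intends.
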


\begin{proof}
This is by direct computations.
\end{proof}
Consider the set $\mathrm{Aut}_{V}^{B}(\hat{B})=\{\gamma \in \mathrm{Aut}(\hat{B})| \mathcal{K}(\gamma)=(id_{B},id_{V}) \}$. Let $\mathcal{E}:0\longrightarrow V\stackrel{i}{\longrightarrow}
\hat{B}\stackrel{p}{\longrightarrow}B\longrightarrow0$ be a non-abelian extension of $B$ by $V$ with a section $s$ of $p$. Assume that $(\nu,\omega,\mu,\theta,D)$ is a non-abelian (2,3)-cocycle induced by the section $s$.

Denote
\begin{align}
		Z_{nab}^{1}(B,V)=&\left\{\varphi:B\rightarrow V\left|\begin{aligned}&a *_V \varphi(x)=[a,\varphi(x),b]_{V}=[b,a,\varphi(x)]_{V}=0,
    \\&\mu(x)\varphi(y)-\mu(y)\varphi(x)= \varphi(x *_B y)+\varphi(x) *_V \varphi(y),~
    \\&\theta(x,z)\varphi(y)-\theta(y,z)\varphi(x)-D(x,y)\varphi(z)\\&=[\varphi(x),\varphi(y),\varphi(z)]_{V}-\varphi([x,y,z]_{B}),
     \end{aligned}\right.\right\}.\label{W5}
	\end{align}
One can readily verify that $Z_{nab}^{1}(B,V)$ forms an abelian group. This group is referred to as the non-abelian 1-cocycle on $B$ with values in $V$..

 \begin{pro} \label{Wm3} Utilizing the aforementioned notations, we arrive at the following conclusions:
\begin{enumerate}[label=$(\roman*)$,leftmargin=15pt]

\item The linear map \begin{equation}\label{W6}{\mathcal{S}}(\gamma)(x)=\varphi_{\gamma}(x)=s(x)-\gamma s(x),~\forall~~\gamma\in \mathrm{Aut}_{V}^{B}(\hat{B}),~x \in B\end{equation} acts as a group homomorphism.
\item ${\mathcal{S}}$ is an isomorphism, implying that $\mathrm{Aut}_{V}^{B}(\hat{B})\simeq Z_{nab}^{1}(B,V)$.
\end{enumerate}
\end{pro}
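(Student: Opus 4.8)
The plan is to verify the two assertions in turn, treating $\mathcal{S}$ first as a map into $\mathrm{Hom}(B,V)$, then checking that its image lands in $Z^1_{nab}(B,V)$, that it is a group homomorphism, and finally that it is bijective. For part $(i)$, given $\gamma\in\mathrm{Aut}_V^B(\hat B)$, the condition $\mathcal{K}(\gamma)=(\mathrm{id}_B,\mathrm{id}_V)$ means $p\gamma s=\mathrm{id}_B$ and $\gamma|_V=\mathrm{id}_V$. First I would check $\varphi_\gamma(x)=s(x)-\gamma s(x)$ indeed lies in $V$: applying $p$ gives $x-p\gamma s(x)=x-x=0$, so $\varphi_\gamma\in\mathrm{Hom}(B,V)$. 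To see that $\varphi_\gamma\in Z^1_{nab}(B,V)$, I would expand the three defining families of identities in \eqref{W5} by feeding the elements $\gamma s(x), \gamma s(y), \gamma s(z)$ (and elements of $V$, on which $\gamma$ is the identity) into the fact that $\gamma$ is a Bol algebra homomorphism, using $\gamma s(x)=s(x)-\varphi_\gamma(x)$ and the cocycle formulas \eqref{C0}--\eqref{C4}; this is the same kind of bilinear/trilinear bookkeeping already carried out in the proof of Proposition \ref{CB} and in Theorem \ref{EC}, specialized to $\alpha=\mathrm{id}_B$, $\beta=\mathrm{id}_V$. The vanishing conditions $a*_V\varphi(x)=[a,\varphi(x),b]_V=[b,a,\varphi(x)]_V=0$ come from applying $\gamma$ to $s(x)*_{\hat B}a$ and to the relevant triple brackets and using $\gamma a=a$, $\gamma b=b$ together with $\mu(x)a=s(x)*_{\hat B}a$ etc. For the homomorphism property, given $\gamma_1,\gamma_2\in\mathrm{Aut}_V^B(\hat B)$ I would compute
\begin{align*}
\varphi_{\gamma_1\gamma_2}(x)&=s(x)-\gamma_1\gamma_2 s(x)=s(x)-\gamma_1\bigl(s(x)-\varphi_{\gamma_2}(x)\bigr)\\
&=\bigl(s(x)-\gamma_1 s(x)\bigr)+\gamma_1\varphi_{\gamma_2}(x)=\varphi_{\gamma_1}(x)+\varphi_{\gamma_2}(x),
\end{align*}
where the last equality uses $\gamma_1|_V=\mathrm{id}_V$ and $\varphi_{\gamma_2}(x)\in V$; since $Z^1_{nab}(B,V)$ is abelian with addition, this shows $\mathcal{S}$ is a group homomorphism.

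For part $(ii)$, I would construct the inverse explicitly. Given $\varphi\in Z^1_{nab}(B,V)$, define $\gamma_\varphi:\hat B\to\hat B$ on the decomposition $\hat w=a+s(x)$ ($a\in V$, $x\in B$) by $\gamma_\varphi(a+s(x))=a-\varphi(x)+s(x)$, i.e. the specialization of the map $\gamma$ built in the proof of Theorem \ref{EC} to $\alpha=\mathrm{id}_B$, $\beta=\mathrm{id}_V$. Bijectivity of $\gamma_\varphi$ is immediate (its inverse is $a+s(x)\mapsto a+\varphi(x)+s(x)$), and that $\gamma_\varphi$ is a Bol algebra homomorphism follows from the computations in Theorem \ref{EC}: the defining identities \eqref{Iam1}--\eqref{Iam5} with $\alpha=\mathrm{id}$, $\beta=\mathrm{id}$ reduce exactly to the conditions defining $Z^1_{nab}(B,V)$ in \eqref{W5}, so the hypothesis $\varphi\in Z^1_{nab}(B,V)$ is precisely what is needed. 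One checks $p\gamma_\varphi s(x)=p(s(x)-\varphi(x))=x$ and $\gamma_\varphi|_V=\mathrm{id}_V$, so $\gamma_\varphi\in\mathrm{Aut}_V^B(\hat B)$. Finally $\mathcal{S}(\gamma_\varphi)(x)=s(x)-\gamma_\varphi s(x)=s(x)-(s(x)-\varphi(x))=\varphi(x)$ and, conversely, for $\gamma\in\mathrm{Aut}_V^B(\hat B)$ one has $\gamma_{\varphi_\gamma}(a+s(x))=a-\varphi_\gamma(x)+s(x)=a+\gamma s(x)-s(x)+s(x)$; here I would use $\gamma(a)=a$ to rewrite $a=\gamma(a)$ and conclude $\gamma_{\varphi_\gamma}=\gamma$ on all of $\hat B$. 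Hence $\mathcal{S}$ is a bijective group homomorphism, giving $\mathrm{Aut}_V^B(\hat B)\simeq Z^1_{nab}(B,V)$.

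The routine part is the multilinear expansion verifying that $\varphi_\gamma$ satisfies the identities of \eqref{W5}; since those identities are literally \eqref{Iam1}--\eqref{Iam5} evaluated at $(\alpha,\beta)=(\mathrm{id}_B,\mathrm{id}_V)$, and the map $\gamma_\varphi$ of Theorem \ref{EC} in that case coincides with $\mathcal{S}^{-1}(\varphi)$, much of this is a direct quotation of earlier arguments rather than new work. The main obstacle I anticipate is purely organizational: making the bijection $\gamma\leftrightarrow\varphi_\gamma$ watertight requires being careful that $\gamma$ is determined by its restriction to $s(B)$ together with $\gamma|_V=\mathrm{id}_V$ — that is, that $\hat B = V\oplus s(B)$ as vector spaces and $\gamma$ acts as stated on each summand — and then confirming the two composites $\mathcal{S}\circ(\varphi\mapsto\gamma_\varphi)$ and $(\varphi\mapsto\gamma_\varphi)\circ\mathcal{S}$ are the identity. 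No genuinely hard estimate or new idea is needed beyond what Theorem \ref{EC} already supplies.
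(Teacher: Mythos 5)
Your proposal is correct and follows essentially the same route as the paper: the same map $\varphi_\gamma=s-\gamma s$, the same additivity computation using $\gamma_1|_V=\mathrm{id}_V$, verification of the $Z^1_{nab}$ identities from $\gamma$ being a homomorphism fixing $V$, and the same candidate inverse $\gamma_\varphi(a+s(x))=a-\varphi(x)+s(x)$ whose homomorphism property is delegated to the computations of Theorem \ref{EC}. The only cosmetic difference is that you exhibit a two-sided inverse, while the paper argues injectivity and surjectivity separately (the surjectivity step constructing exactly your $\gamma_\varphi$).
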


\begin{proof}
By Eqs.~(\ref{C1})-(\ref{C3}), (\ref{W5}) and (\ref{W6}), for all $x,y,z\in B$, we have,
\begin{align*}
&\{\varphi_{\gamma}(x),\varphi_{\gamma}(y),\varphi_{\gamma}(z)\}_{V}+\theta(y,z)\varphi_{\gamma}(x)-\theta(x,z)\varphi_{\gamma}(y)\\
&+D(x,y)\varphi_{\gamma}(z)-\varphi_{\gamma}([x,y,z]_{B})
\\=&[s(x)-\gamma s(x),s(y)-\gamma s(y),s(z)-\gamma s(z)]_{\hat{B}}+[s(x)-\gamma s(x),s(y),s(z)]_{\hat{B}}\\
&-[s(y)-\gamma s(y),s(x),s(z)]_{\hat{B}}+[s(x),s(y),s(z)-\gamma s(z)]_{\hat{B}}\\
&+\gamma s([x,y,z]_{B})-s([x,y,z]_{B})
\\=&\gamma s([x,y,z]_{B})-[\gamma s(x),\gamma s(y),\gamma s(z)]_{\hat{B}}+[s(x),s(y),s(z)]_{\hat{B}}-s([x,y,z]_{B})
\\=&\omega(x, y,z)-\gamma \omega(x, y,z)
\\=&0.
\end{align*}

Similarly, it can be verified that $\varphi_{\gamma}$ satisfies the remaining identities in $ Z_{nab}^{1}(B,V)$, thereby establishing that, ${\mathcal{S}}$ is well-defined. For any $\gamma_1,\gamma_2\in \mathrm{Aut}_{V}^{B}(\hat{B})$ and $x\in B$, assuming ${\mathcal{S}}(\gamma_1)=\varphi_{\gamma_1}$ and ${\mathcal{S}}(\gamma_2)=\varphi_{\gamma_2}$, we can use Eqs.~ (\ref{W3}) and (\ref{W6}) to derive:
\begin{align*}{\mathcal{S}}(\gamma_1 \gamma_2)(x)&=s(x)-\gamma_1 \gamma_2s(x)
\\&=s(x)-\gamma_1(s(x)-\varphi_{\gamma_2}(x))
\\&=s(x)-\gamma_1s(x)+\gamma_{1}\varphi_{\gamma_2}(x)
\\&=\varphi_{\gamma_1}(x)+\varphi_{\gamma_2}(x),\end{align*}
which demonstrates that ${\mathcal{S}}(\gamma_1 \gamma_2)={\mathcal{S}}(\gamma_1)+{\mathcal{S}}( \gamma_2)$, confirming that ${\mathcal{S}}$ is a group homomorphism.

First, consider the inclusion map ${\mathcal{I}}:\mathrm{Aut}_{V}^{B}(\hat{B})\longrightarrow \mathrm{Aut}_{V}(\hat{B})$. By its very definition, ${\mathcal{I}}$ is a homomorphism of groups.

To show that  ${\mathcal{S}}$ is injective, take any $\gamma\in \mathrm{Aut}_{V}^{B}(\hat{B})$. We know that ${\mathcal{K}}(\gamma)=(p\gamma s,\gamma|_{V})=(id_B,id_V)$. If ${\mathcal{S}}(\gamma)=\varphi_{\gamma}=0$, then for all \(x\in B\), $\varphi_{\gamma}(x)=s(x)-\gamma s(x)=0$. This implies that  $\gamma=id_{\hat{B}}$, proving that ${\mathcal{S}}$ is injective.

To prove that  ${\mathcal{S}}$ is surjective, note that since \(s\) is a section of \(p\), every element \(\hat{x}\in\hat{B}\) can be written as \(\hat{x}=a+s(x)\) for some \(a\in V\) and \(x\in B\). Given any \(\varphi\in Z_1^{\text{nab}}(B,V)\), define a linear map \(\gamma:\hat{B}\to\hat{B}\) by \(\gamma(\hat{x})=s(x)-\varphi(x)+a\). It is clear that ${\mathcal{K}}(\gamma)=(p\gamma s,\gamma|_{V})=(id_B,id_V)$. To complete the proof, we need to show that \(\gamma\) is an automorphism of the Bol algebra \(\hat{B}\). This can be done by following the same steps as in the proof of the converse part of Theorem \ref{EC}. Thus, $\gamma\in \mathrm{Aut}_{V}^{B}(\hat{B})$, which shows that  ${\mathcal{S}}$ is surjective. Since  ${\mathcal{S}}$ is both injective and surjective, it is bijective, and we have $\mathrm{Aut}_{V}^{B}(\hat{B})\simeq Z_{nab}^{1}(B,V)$.
\end{proof}

 \begin{thm} \label{Wm4} Given a non-abelian extension $\mathcal{E}:0\longrightarrow V\stackrel{i}{\longrightarrow}
\hat{B}\stackrel{p}{\longrightarrow}B\longrightarrow0$ of $B$ by $V$ with a section $s$ of $\hat{B}$, we can construct the following exact sequence:
$$1\longrightarrow \mathrm{Aut}_{V}^{B}(\hat{B})\stackrel{\mathcal{I}}{\longrightarrow} \mathrm{Aut}_{V}(\hat{B})\stackrel{\mathcal{K}}{\longrightarrow}\mathrm{Aut}(B)\times \mathrm{Aut}(V)\stackrel{\mathcal{W}}{\longrightarrow} H^{(2,3)}_{nab}(B,V),$$
where $\mathrm{Aut}_{V}^{B}(\hat{B})$ is defined as the set $\{\gamma \in \mathrm{Aut}(\hat{B})| {\mathcal{K}}(\gamma)=(id_{B},id_{V}) \}$.\end{thm}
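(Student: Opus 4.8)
The plan is to verify exactness of the four-term sequence
$$1\longrightarrow \mathrm{Aut}_{V}^{B}(\hat{B})\stackrel{\mathcal{I}}{\longrightarrow} \mathrm{Aut}_{V}(\hat{B})\stackrel{\mathcal{K}}{\longrightarrow}\mathrm{Aut}(B)\times \mathrm{Aut}(V)\stackrel{\mathcal{W}}{\longrightarrow} H^{(2,3)}_{nab}(B,V)$$
at each of its interior spots. Exactness at $\mathrm{Aut}_{V}^{B}(\hat{B})$ is just the injectivity of the inclusion $\mathcal{I}$, which is immediate; this also accounts for the leading $1$. So there are really two things to check: exactness at $\mathrm{Aut}_{V}(\hat{B})$, i.e. $\mathrm{Im}\,\mathcal{I}=\mathrm{Ker}\,\mathcal{K}$, and exactness at $\mathrm{Aut}(B)\times\mathrm{Aut}(V)$, i.e. $\mathrm{Im}\,\mathcal{K}=\mathrm{Ker}\,\mathcal{W}$.

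For exactness at $\mathrm{Aut}_{V}(\hat{B})$: by definition $\mathrm{Aut}_{V}^{B}(\hat{B})=\{\gamma\in\mathrm{Aut}(\hat{B})\mid \mathcal{K}(\gamma)=(\mathrm{id}_B,\mathrm{id}_V)\}=\mathrm{Ker}\,\mathcal{K}$ (viewing $\mathcal{K}$ as a group homomorphism via Proposition \ref{Wm2}), and since $\mathcal{I}$ is the inclusion, $\mathrm{Im}\,\mathcal{I}$ is precisely this subgroup. So this step is essentially a tautology once one has recorded that $\mathcal{K}$ is a group homomorphism (Proposition \ref{Wm2}) and that $\mathcal{I}$ is the inclusion of its kernel. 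I would simply state this.

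The substantive step is exactness at $\mathrm{Aut}(B)\times\mathrm{Aut}(V)$. First I would show $\mathrm{Im}\,\mathcal{K}\subseteq\mathrm{Ker}\,\mathcal{W}$: if $(\alpha,\beta)=\mathcal{K}(\gamma)=(p\gamma s,\gamma|_V)$ for some $\gamma\in\mathrm{Aut}_V(\hat{B})$, then $\gamma i=i\beta$ and $p\gamma=\alpha p$, so by definition $(\alpha,\beta)$ is inducible; Theorem \ref{Eth1} then says $(\nu,\omega,\mu,\theta,D)$ and $(\nu,\omega,\mu,\theta,D)_{(\alpha,\beta)}$ are equivalent non-abelian $(2,3)$-cocycles, which means $\mathcal{W}(\alpha,\beta)=[(\nu,\omega,\mu,\theta,D)_{(\alpha,\beta)}-(\nu,\omega,\mu,\theta,D)]=0$ in $H^{(2,3)}_{nab}(B,V)$. (One small point to address: $p\gamma s$ should be checked to be a genuine automorphism of $B$ and independent of quibbles — but this is part of Proposition \ref{Wm2}, so I can cite it.) Conversely, for $\mathrm{Ker}\,\mathcal{W}\subseteq\mathrm{Im}\,\mathcal{K}$: suppose $\mathcal{W}(\alpha,\beta)=0$, i.e. the two cocycles are equivalent; by Theorem \ref{Eth1} this forces $(\alpha,\beta)$ to be inducible, so there is $\gamma\in\mathrm{Aut}_V(\hat{B})$ with $i\beta=\gamma i$ and $p\gamma=\alpha p$. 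Then $\gamma|_V=\beta$ directly, and $p\gamma s=\alpha p s=\alpha\,\mathrm{id}_B=\alpha$, hence $\mathcal{K}(\gamma)=(\alpha,\beta)$, so $(\alpha,\beta)\in\mathrm{Im}\,\mathcal{K}$.

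The main obstacle, such as it is, is purely bookkeeping: making sure the identification $\mathrm{Ker}\,\mathcal{K}=\mathrm{Aut}_V^B(\hat{B})$ is used consistently with Proposition \ref{Wm2}'s assertion that $\mathcal{K}$ is a group homomorphism, and confirming that $\mathcal{W}$ — although only a map of pointed sets, not a homomorphism — has its "kernel" $\mathcal{W}^{-1}(0)$ correctly matched against $\mathrm{Im}\,\mathcal{K}$ using Theorem \ref{Eth1} in both directions. No new computation is needed: every analytic fact is already packaged in Theorem \ref{Eth1} (inducibility $\Longleftrightarrow$ cocycle equivalence) and in Propositions \ref{Wm2}, \ref{Wm3}. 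I would therefore present the proof as three short verifications — injectivity of $\mathcal{I}$, $\mathrm{Im}\,\mathcal{I}=\mathrm{Ker}\,\mathcal{K}$, and the two inclusions comprising $\mathrm{Im}\,\mathcal{K}=\mathrm{Ker}\,\mathcal{W}$ — each a one-line appeal to the stated results.
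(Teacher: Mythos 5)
Your proposal is correct and follows essentially the same route as the paper: exactness at $\mathrm{Aut}_{V}(\hat{B})$ is treated as definitional ($\mathrm{Aut}_{V}^{B}(\hat{B})=\mathrm{Ker}\,\mathcal{K}$ with $\mathcal{I}$ the inclusion), and both inclusions of $\mathrm{Im}\,\mathcal{K}=\mathrm{Ker}\,\mathcal{W}$ are obtained by translating between membership in $\mathrm{Im}\,\mathcal{K}$ and inducibility (using $p\gamma s p=p\gamma$, $\gamma|_V=\beta$, $\alpha=\alpha p s=p\gamma s$) and then invoking Theorem \ref{Eth1} in both directions, exactly as the paper does. No substantive difference.
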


\begin{proof} Given the injectivity of ${\mathcal{I}}$ and the equality $\mathrm{Ker} {\mathcal{K}}=\mathrm{Im}{\mathcal{I}}$, we focus on proving $\mathrm{Ker} {\mathcal{W}}=\mathrm{Im}{\mathcal{K}}$. For any pair $(\alpha,\beta)\in \mathrm{Ker} {\mathcal{W}}$, Theorem  \ref{Eth1} ensures that $(\alpha,\beta)$ can be extended with respect to the non-abelian extension $\mathcal{E}$. This implies the existence of a $\gamma\in \mathrm{Aut}_{V}^{B}(\hat{B})$ such that $i\beta=\gamma i$ and $p\gamma=\alpha p$. Consequently, we find $\alpha=\alpha p s=p\gamma s,~\beta=\gamma|_{V}$, which confirms that $(\alpha,\beta)\in \mathrm{Im}{\mathcal{K}}$.

In the reverse direction, for any $(\alpha,\beta)\in \mathrm{Im}{\mathcal{K}}$, there is an isomorphism $\gamma\in \mathrm{Aut}_{V}(\hat{B})$ satisfying Eq.~(\ref{W3}). Considering the condition $\mathrm{Im}i=\mathrm{Ker }p$, we deduce $\alpha p=p\gamma s p=p\gamma $ and $i\beta=\gamma i$. This shows that $(\alpha,\beta)$ is inducible with respect to the non-abelian extension $\mathcal{E}$. By Theorem \ref{Eth1}, we conclude that $(\alpha,\beta)\in \mathrm{Ker} {\mathcal{W}}$. Thus, we establish that $\mathrm{Ker} {\mathcal{W}}=\mathrm{Im}{\mathcal{K}}$.
\end{proof}

Combining Proposition \ref{Wm3} and Theorem \ref{Wm4}, we have

\begin{thm} \label{Wm5} Let
$\mathcal{E}:0\longrightarrow V\stackrel{i}{\longrightarrow}
\hat{B}\stackrel{p}{\longrightarrow}B\longrightarrow0$
be a non-abelian extension of $B$ by $V$. There is an exact sequence:
$$0\longrightarrow Z_{nab}^{1}(B,V)\stackrel{\mathcal{I}\mathcal{S}^{-1}}{\longrightarrow} \mathrm{Aut}_{V}(\hat{B})\stackrel{\mathcal{K}}{\longrightarrow}\mathrm{Aut}(B)\times \mathrm{Aut}(V)\stackrel{\mathcal{W}}{\longrightarrow} H^{(2,3)}_{nab}(B,V).$$
\end{thm}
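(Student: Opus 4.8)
The plan is to assemble Theorem \ref{Wm5} purely by transporting the exact sequence of Theorem \ref{Wm4} along the isomorphism $\mathcal{S}:\mathrm{Aut}_V^B(\hat B)\xrightarrow{\ \simeq\ }Z_{nab}^1(B,V)$ established in Proposition \ref{Wm3}(ii). By Theorem \ref{Wm4} we already have the exact sequence
$$1\longrightarrow \mathrm{Aut}_{V}^{B}(\hat{B})\stackrel{\mathcal{I}}{\longrightarrow} \mathrm{Aut}_{V}(\hat{B})\stackrel{\mathcal{K}}{\longrightarrow}\mathrm{Aut}(B)\times \mathrm{Aut}(V)\stackrel{\mathcal{W}}{\longrightarrow} H^{(2,3)}_{nab}(B,V).$$
First I would note that $\mathcal{S}$ is a group isomorphism (Proposition \ref{Wm3}), so $\mathcal{S}^{-1}:Z_{nab}^1(B,V)\to\mathrm{Aut}_V^B(\hat B)$ is well-defined and an isomorphism; precomposing the first map with it gives the map $\mathcal{I}\mathcal{S}^{-1}:Z_{nab}^1(B,V)\to\mathrm{Aut}_V(\hat B)$ appearing in the statement.

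Next I would check exactness at each slot. Exactness at $Z_{nab}^1(B,V)$ (i.e. injectivity of $\mathcal{I}\mathcal{S}^{-1}$, equivalently the sequence starting with $0$) is immediate: $\mathcal{S}^{-1}$ is a bijection and $\mathcal{I}$ is the inclusion, hence injective, so their composite is injective. Exactness at $\mathrm{Aut}_V(\hat B)$ requires $\mathrm{Im}(\mathcal{I}\mathcal{S}^{-1})=\mathrm{Ker}\,\mathcal{K}$; but $\mathcal{S}^{-1}$ is surjective onto $\mathrm{Aut}_V^B(\hat B)$, so $\mathrm{Im}(\mathcal{I}\mathcal{S}^{-1})=\mathrm{Im}\,\mathcal{I}=\mathrm{Ker}\,\mathcal{K}$, the last equality being part of Theorem \ref{Wm4} (recall $\mathrm{Aut}_V^B(\hat B)=\mathrm{Ker}\,\mathcal{K}$ by definition, and $\mathcal{I}$ is the inclusion). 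Finally, exactness at $\mathrm{Aut}(B)\times\mathrm{Aut}(V)$, namely $\mathrm{Ker}\,\mathcal{W}=\mathrm{Im}\,\mathcal{K}$, is exactly the content proved in Theorem \ref{Wm4} and is not affected by the change on the left-hand term. Assembling these three observations yields the claimed exact sequence.

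I do not anticipate a genuine obstacle here: the theorem is a formal consequence of the two results it explicitly cites, and the only things to verify are that $\mathcal{S}$ being an isomorphism lets one replace $\mathrm{Aut}_V^B(\hat B)$ by $Z_{nab}^1(B,V)$ without disturbing exactness, and that the composite $\mathcal{I}\mathcal{S}^{-1}$ has image $\mathrm{Ker}\,\mathcal{K}$. The mildly delicate point worth stating carefully is that $Z_{nab}^1(B,V)$ carries the abelian group structure under which $\mathcal{S}$ is a homomorphism (Proposition \ref{Wm3}(i)), so that "exact sequence" is meant in the category of groups; once that is acknowledged the proof is a one-paragraph diagram-chase. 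I would therefore present the argument compactly: invoke Proposition \ref{Wm3} for the isomorphism $\mathcal{S}$, substitute into the sequence of Theorem \ref{Wm4}, and check the three exactness conditions as above.
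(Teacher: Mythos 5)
Your proposal is correct and matches the paper's own route: the paper states Theorem \ref{Wm5} as an immediate consequence of combining Proposition \ref{Wm3} and Theorem \ref{Wm4}, which is exactly the transport-along-$\mathcal{S}$ argument you spell out. Your version just makes the exactness checks explicit, which is fine.
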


\section{Particular case}

Let's explore a specific instance based on the outcomes from the prior section. Consider an abelian extension $\mathcal{E}$ of $B$ by $V$.
 Suppose $(\nu,\omega)$ is a (2,3)-cocycle that corresponds to this extension $\mathcal{E}$. Utilizing  Eq.~(\ref{C2}) and  Eq.~(\ref{C3}), we can construct a quadruple $(V,\mu,\theta,D)$ that acts as a representation of $B$, as established in reference \cite{Issa}.

\begin{thm}[\cite{Issa}]
The structure $(B \oplus V, *_{\nu},[ \ , \ , \  ]_{\omega})$ constitutes a Bol algebra if and only if $(\nu,\omega)$ is a (2,3)-cocycle of $B$ with coefficients in the representation $(V,\mu,\theta,D)$.

The classification of abelian extensions of $B$ by $V$ is achieved through the cohomology group $H^{(2,3)}(B,V)$.
\end{thm}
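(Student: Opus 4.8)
The plan is to treat the abelian case as a specialization of the non-abelian machinery developed in Sections 3--5. In an abelian extension the bracket and product on $V$ are trivial ($a *_V b = 0$ and $[a,b,c]_V = 0$ for all $a,b,c \in V$), so the non-abelian $(2,3)$-cocycle conditions \eqref{B00}--\eqref{B1} collapse: the identities \eqref{B24}, \eqref{B25}, \eqref{B4} become vacuous, and \eqref{B23}, \eqref{B32} lose their $*_V$ and $\mu$-composition terms. What survives are precisely \eqref{eq2.9}--\eqref{eq2.7}, i.e. the axioms that $(V,\mu,\theta,D)$ is a representation of $B$ in the sense of the second Definition in Section 2, together with the cocycle identities for $(\nu,\omega)$ matching those in the Definition of a $(2,3)$-cocycle. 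Thus the first statement is obtained by invoking Proposition \ref{BOL} and checking, term by term, that under the vanishing of the $V$-operations the septuple $(\nu,\omega,\mu,\theta,D)$ being a non-abelian $(2,3)$-cocycle is equivalent to $(V,\mu,\theta,D)$ being a representation and $(\nu,\omega)$ being a $(2,3)$-cocycle with coefficients in it. Concretely, I would go through \eqref{B01}--\eqref{B1}, strike out every term containing $*_V$ or $[-,-,-]_V$, and read off lines \eqref{eq2.9}--\eqref{eq2.7} plus the three displayed cocycle equations.

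For the second statement I would specialize the classification result of Section 3. Recall that there we showed $\Theta:\mathcal{E}_{nab}(B,V)\to H^{(2,3)}_{nab}(B,V)$, $\mathcal{E}\mapsto(\nu_s,\omega_s)$, is a bijection. In the abelian setting one fixes the representation $(\mu,\theta,D)$, so the relevant notion of equivalence of extensions is equivalence over a \emph{fixed} $B$-module structure; correspondingly the equivalence relation on non-abelian $(2,3)$-cocycles given by \eqref{E1}--\eqref{E5} simplifies when the $V$-operations vanish and $\mu_1 = \mu_2$, $\theta_1 = \theta_2$, $D_1 = D_2$: equations \eqref{E3}, \eqref{E4}, \eqref{E5} force $a *_V \varphi(x) = [a,\varphi(x),\varphi(y)]_V = [\varphi(x),\varphi(y),a]_V = 0$ (automatic here), and \eqref{E1}, \eqref{E2} become exactly the coboundary relations defining $B^3(B,V)$ and $B^2(B,V)$ from the Definition of a $(2,3)$-coboundary in Section 2. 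Hence the equivalence classes of abelian extensions of $B$ by the module $V$ are in bijection with $(Z^2(B,V)\times Z^3(B,V))/(B^2(B,V)\times B^3(B,V)) = H^{(2,3)}(B,V)$, which is the asserted classification.

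The main obstacle I anticipate is purely bookkeeping rather than conceptual: one must verify that restricting the non-abelian cocycle equations and the non-abelian equivalence relation to the abelian locus (trivial $*_V$, trivial ternary bracket on $V$, and a rigidly fixed $(\mu,\theta,D)$) reproduces \emph{verbatim} the representation axioms \eqref{eq2.9}--\eqref{eq2.7} and the $(2,3)$-cocycle/coboundary conditions of Section 2, with no missing or extra terms. A subtle point is that in the genuinely non-abelian classification the module data $(\mu,\theta,D)$ is allowed to vary within an equivalence class, whereas the abelian classification of \cite{Issa} keeps it fixed; I would address this by noting that when $V$ has trivial operations the maps $\varphi$ appearing in \eqref{E3}--\eqref{E5} cannot alter $(\mu,\theta,D)$, so the two notions of equivalence coincide on the abelian locus and the restriction of $\Theta$ is precisely the classical classification map. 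Since all of these reductions are immediate from the displayed formulas, the proof amounts to citing Proposition \ref{BOL}, Proposition \ref{CB} and the classification Proposition of Section 3, and observing the claimed degenerations; I would simply remark that the details are a direct specialization and refer to \cite{Issa} for the representation-theoretic bookkeeping.
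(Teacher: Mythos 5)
The paper itself offers no argument for this theorem: it is quoted from \cite{Issa}, with the preceding sentences only observing that Eqs.~\eqref{C2}--\eqref{C3} give the representation $(V,\mu,\theta,D)$. So your route --- deriving the statement by specializing the non-abelian machinery of Section 3 to trivial $V$-operations --- is genuinely different from the paper's (which is a citation), and for the first assertion it works: with $a*_Vb=0$ and $[a,b,c]_V=0$, Proposition \ref{BOL} reduces exactly to the claim that \eqref{B13}, \eqref{B23}, \eqref{B32}, \eqref{B2}, \eqref{B3}, \eqref{B31} are the representation axioms \eqref{eq2.9}--\eqref{eq2.7} and that \eqref{B12}, \eqref{B22}, \eqref{B1} are the three $(2,3)$-cocycle identities of Section 2 (modulo the paper's evident typos such as $\nu(x_1*x_2)$ for $\mu(x_1*x_2)$). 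That part of your bookkeeping is sound and arguably more self-contained than the paper's treatment.

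For the second assertion, however, there is a concrete gap. The $(2,3)$-coboundaries of Section 2 are indexed by a \emph{pair} $(f,\chi)$ with $\chi\in V$ a companion: the $\nu$-component of a coboundary contains the extra term $\bigl(D(x_1,x_2)-\mu(x_1*x_2)\bigr)(\chi)$. When you restrict the non-abelian equivalence relation \eqref{E1}--\eqref{E5} to the abelian locus, no companion appears: \eqref{E2} only produces coboundaries of the form $\mu(y)\varphi(x)-\mu(x)\varphi(y)+\varphi(x*_By)$, i.e.\ those with $\chi=0$ (the sign discrepancy with the Section 2 formula is harmless, since coboundaries form a subspace, but the missing $\chi$-term is not). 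Consequently the specialization of the classification Proposition of Section 3 gives a bijection with $\bigl(Z^2\times Z^3\bigr)$ modulo the $\chi=0$ coboundaries, which is a priori a \emph{larger} quotient target than $H^{(2,3)}(B,V)$. To conclude the stated classification you would still have to show that altering $(\nu,\omega)$ by a coboundary with $f=0$, $\chi\neq0$ --- i.e.\ adding $\bigl(D(\cdot,\cdot)-\mu(\cdot*\cdot)\bigr)(\chi)$ to $\nu$ --- does not change the equivalence class of the extension, and this does not follow from \eqref{E1}--\eqref{E5}: an equivalence $f(x+a)=x+\sigma(x)+a$ of extensions only ever produces $\chi=0$ coboundaries. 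So either this extra step must be argued separately (or the notion of equivalence used in \cite{Issa} must be matched against the one in diagram \eqref{Ene1}), and your proposal currently passes over exactly this point by asserting that \eqref{E1}--\eqref{E2} "become exactly" the coboundary relations. Since the theorem is imported from \cite{Issa}, citing it is legitimate; but as a self-contained derivation your argument is incomplete precisely at the companion term.
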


Building on this foundation, we offer the following conclusion:

\begin{thm} Consider an abelian extension of $B$ by
$V$ given by $0\longrightarrow V\stackrel{i}{\longrightarrow}
\hat{B}\stackrel{p}{\longrightarrow}B\longrightarrow0$, equipped with a section $s$ of $p$. Let $(\nu,\omega)$ denote the (2,3)-cocycle induced by $s$. A pair of automorphisms $(\alpha,\beta)\in \mathrm{Aut}(B)\times \mathrm{Aut}(V)$ can be induced if and only if there exists a linear map $\varphi:B\longrightarrow V$ that fulfills the following conditions:
\begin{align}\label{AEE1}
     \beta\omega(x,y,z)-\omega(\alpha(x),\alpha(y),\alpha(z))=&
\theta(\alpha(x),\alpha(z))\varphi(y)-\theta(\alpha(y),\alpha(z))\varphi(x)
\\&\nonumber-D(\alpha(x),\alpha(y))\varphi(z)+\varphi([x,y,z]_{V}),
\end{align}
\begin{equation}\label{AEE2}
     \beta \nu(x,y)-\nu(\alpha(x),\alpha(y))=
    \mu(\alpha(y))\varphi(x)- \mu(\alpha(x))\varphi(y)+\varphi(x *_B y),
\end{equation}
\begin{equation}\label{AEE3}
     \beta(\theta(x,y)a)=\theta(\alpha(x),\alpha(y))\beta(a),~~\beta \mu(x)a=\mu(\alpha(x))\beta(a).
\end{equation}
\begin{equation}\label{AEE4}
\beta D(x,y)a=D(\alpha(x),\alpha(y))\beta(a).
\end{equation}
\end{thm}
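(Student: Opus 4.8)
The plan is to derive this statement as the abelian specialization of Theorem~\ref{EC}. In an abelian extension the bracket and product on $V$ are trivial, so $[a,b,c]_V=0$, $a*_V b=0$, and moreover $\theta,D,\mu$ factor through $B$ (they define an honest representation $(V,\mu,\theta,D)$ rather than depending on a choice of section). Under these simplifications the five defining identities of an inducible pair, namely Eqs.~\eqref{Iam1}--\eqref{Iam5}, collapse: the terms $[\varphi(x),\varphi(y),\varphi(z)]_V$ in \eqref{Iam1}, $\varphi(x)*_V\varphi(y)$ in \eqref{Iam2}, and the entire right-hand sides of \eqref{Iam3}, \eqref{Iam4}, \eqref{Iam5} all vanish. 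Thus \eqref{Iam1} becomes \eqref{AEE1}, \eqref{Iam2} becomes \eqref{AEE2} (after rearranging the sign of the $\mu$-terms), \eqref{Iam3} becomes $\beta(\theta(x,y)a)=\theta(\alpha(x),\alpha(y))\beta(a)$, \eqref{Iam4} becomes \eqref{AEE4}, and \eqref{Iam5} becomes $\beta\mu(x)a=\mu(\alpha(x))\beta(a)$; the last two displays together are exactly \eqref{AEE3}--\eqref{AEE4}.

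First I would recall that for an abelian extension the induced data $(\nu_s,\omega_s,\mu_s,\theta_s,D_s)$ reduce to a genuine $(2,3)$-cocycle $(\nu,\omega)$ together with the representation $(V,\mu,\theta,D)$, citing the result of \cite{Issa} quoted just above; in particular $\mu_s,\theta_s,D_s$ are independent of $s$ because $V$ is an abelian ideal of $\hat B$. Then I would run the two directions of Theorem~\ref{EC} verbatim, simply observing at each step that every occurrence of $*_V$ or $[\ ,\ ,\ ]_V$ is zero. For the forward direction: if $(\alpha,\beta)$ is inducible via $\gamma\in\mathrm{Aut}_V(\hat B)$, set $\varphi(x)=(s\alpha-\gamma s)(x)\in\ker p=V$ as before; the same bracket expansions used to prove \eqref{Iam1}--\eqref{Iam5} now yield \eqref{AEE1}--\eqref{AEE4} directly, with the cubic $\varphi$-terms absent. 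For the converse: given $\varphi$ satisfying \eqref{AEE1}--\eqref{AEE4}, define $\gamma(a+s(x))=\beta(a)-\varphi(x)+s\alpha(x)$ exactly as in Theorem~\ref{EC}; bijectivity is identical, and the homomorphism check for $*_{\hat B}$ and $[\ ,\ ,\ ]_{\hat B}$ goes through with the $V$-internal operations set to zero, so that \eqref{AEE1}--\eqref{AEE4} are precisely what is needed.

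There is essentially no new obstacle here: the theorem is a formal corollary of Theorem~\ref{EC} obtained by setting the $V$-multiplications to zero and noting that $(\mu,\theta,D)$ then no longer depends on the section. The only point requiring a small amount of care is bookkeeping of signs in passing from \eqref{Iam2} to \eqref{AEE2} (the terms $-\mu(\alpha(x))\varphi(y)+\mu(\alpha(y))\varphi(x)$ are simply regrouped as $\mu(\alpha(y))\varphi(x)-\mu(\alpha(x))\varphi(y)$) and confirming that \eqref{Iam3} and \eqref{Iam5}, whose right-hand sides vanish, combine into the two equalities displayed in \eqref{AEE3}. I would therefore write the proof as: ``This follows from Theorem~\ref{EC} by specializing to the abelian case, in which $[\ ,\ ,\ ]_V=0$, $*_V=0$, and $(\mu,\theta,D)$ is a representation of $B$ independent of the section $s$; under these identifications Eqs.~\eqref{Iam1}--\eqref{Iam5} reduce to Eqs.~\eqref{AEE1}--\eqref{AEE4}.'' and, if a referee wants detail, append the two short direct verifications sketched above.
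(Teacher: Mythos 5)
Your proposal is correct and matches the paper's own argument: the paper proves this theorem with the single remark that it ``follows immediately from Theorem~\ref{EC},'' i.e.\ exactly the specialization you carry out, where $*_V$ and $[\ ,\ ,\ ]_V$ vanish so that Eqs.~\eqref{Iam1}--\eqref{Iam5} collapse to Eqs.~\eqref{AEE1}--\eqref{AEE4}. Your version simply spells out the bookkeeping the paper leaves implicit.
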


\begin{proof}
This conclusion follows immediately from Theorem \ref{EC}.
\end{proof}

For any $(\alpha,\beta)\in \mathrm{Aut}(B
)\times \mathrm{Aut}(V)$, the action of $(\nu,\omega)_{(\alpha,\beta)}$ does not necessarily form a(2,3)-cocycle. To be precise, $(\nu,\omega)_{(\alpha,\beta)}$ is a (2,3)-cocycle only if Eq.~(\ref{AEE3}) holds. In light of this, it is particularly natural to introduce the space of compatible pairs of automorphisms.
\begin{align*}
		C_{(B,V)}=&\left\{(\alpha,\beta)\in \mathrm{Aut}(B
)\times \mathrm{Aut}(V)\left|\begin{aligned}&\beta(\theta(x,y)a)=\theta(\alpha(x),\alpha(y))\beta(a),
\\&\beta \mu(x)a=\mu(\alpha(x))\beta(a),~\forall~x,y\in {B},a\in {V}
     \end{aligned}\right.\right\}.
	\end{align*}

Similar to Theorem \ref{Eth1}, we obtain the following result.

\begin{thm}\label{Wm6}
Let $\mathcal{E}:0\longrightarrow V\stackrel{i}{\longrightarrow}
\hat{B}\stackrel{p}{\longrightarrow}B\longrightarrow0$ with a section $s$ of $p$, and let $(\nu,\omega)$ be a (2,3)-cocycle associated with $\mathcal{E}$. A pair $(\alpha,\beta)\in C_{(B,V)}$ is extensible with respect to the abelian extension $\mathcal{E}$ if and only if $(\nu,\omega)$ and $(\nu,\omega)_{(\alpha,\beta)}$ are in the same cohomological class.
\end{thm}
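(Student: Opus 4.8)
The plan is to reduce Theorem \ref{Wm6} to the general non-abelian result, Theorem \ref{Eth1}, by exploiting the fact that an abelian extension is precisely a non-abelian extension in which the induced bracket and product on $V$ vanish, i.e. $a *_V b = 0$ and $[a,b,c]_V = 0$ for all $a,b,c \in V$. Under this degeneracy, the non-abelian $(2,3)$-cocycle $(\nu,\omega,\mu,\theta,D)$ induced by $s$ has its $(\mu,\theta,D)$-part equal to a genuine representation of $B$ on $V$ (as recalled in the statement from \cite{Issa}), and the extensibility conditions \eqref{Iam1}--\eqref{Iam5} of Theorem \ref{EC} collapse: the terms $[\varphi(x),\varphi(y),\varphi(z)]_V$, $\varphi(x)*_V\varphi(y)$, $[\beta(a),\varphi(x),\varphi(y)]_V$, $[\varphi(x),\varphi(y),\beta(a)]_V$ and $\beta(a)*_V\varphi(x)$ all vanish, so \eqref{Iam1}--\eqref{Iam2} become exactly \eqref{AEE1}--\eqref{AEE2}, while \eqref{Iam3}--\eqref{Iam5} become \eqref{AEE3}--\eqref{AEE4}, which is to say the condition $(\alpha,\beta)\in C_{(B,V)}$ together with $\beta D(x,y) = D(\alpha(x),\alpha(y))\beta$.

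First I would observe that for $(\alpha,\beta)\in C_{(B,V)}$ the twisted data $(\nu,\omega)_{(\alpha,\beta)}$, defined by \eqref{Inc1}--\eqref{Inc3}, is again a $(2,3)$-cocycle of $B$ with coefficients in the \emph{same} representation $(V,\mu,\theta,D)$: indeed \eqref{Inc2}--\eqref{Inc3} show $\mu_{(\alpha,\beta)} = \mu$, $\theta_{(\alpha,\beta)} = \theta$, $D_{(\alpha,\beta)} = D$ exactly because the compatibility equations in $C_{(B,V)}$ (plus the automatic consequence for $D$, which follows from $D(x,y) = \theta(x,y)-\theta(y,x)$ via \eqref{eq2.9}) say that $\beta$ intertwines the action through $\alpha$. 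Hence $(\nu,\omega)_{(\alpha,\beta)} - (\nu,\omega)$ is a well-defined element of $C^2(B,V)\times C^3(B,V)$ and the statement "$(\nu,\omega)$ and $(\nu,\omega)_{(\alpha,\beta)}$ lie in the same cohomology class of $H^{(2,3)}(B,V)$" is literally the assertion that this difference is a $(2,3)$-coboundary, i.e. that there is $(f,\chi)$ — and in the abelian case one may take $\chi$ absorbed appropriately — realizing it. This is the precise analogue, in the degenerate setting, of the equivalence of non-abelian $(2,3)$-cocycles appearing in Theorem \ref{Eth1}.

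The core argument then runs in two directions. For the forward implication, suppose $(\alpha,\beta)\in C_{(B,V)}$ is extensible. By Theorem \ref{EC} there is a linear $\varphi:B\to V$ satisfying \eqref{Iam1}--\eqref{Iam5}; discarding the now-vanishing $V$-bracket and $V$-product terms, \eqref{Iam1}--\eqref{Iam2} reduce to \eqref{AEE1}--\eqref{AEE2}, and these are precisely the coboundary equations for $(\nu,\omega)_{(\alpha,\beta)} - (\nu,\omega)$ with companion map $\psi := \varphi\alpha^{-1}$, after substituting $x = \alpha(x_0)$ etc. and using \eqref{Inc1}, exactly as in the computation carried out in the proof of Theorem \ref{Eth1}. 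Conversely, if $(\nu,\omega)$ and $(\nu,\omega)_{(\alpha,\beta)}$ differ by a coboundary witnessed by a linear map $\psi:B\to V$, set $\varphi := \psi\alpha$; unwinding the coboundary identities \eqref{AEE1}--\eqref{AEE2} backwards gives \eqref{Iam1}--\eqref{Iam2}, and the membership $(\alpha,\beta)\in C_{(B,V)}$ supplies \eqref{Iam3} and \eqref{Iam5}, while \eqref{Iam4} follows from \eqref{Iam3} and the relation $D = \theta^{\mathrm{op}}$-type identity \eqref{eq2.9}; then Theorem \ref{EC} produces the required $\gamma\in\mathrm{Aut}_V(\hat B)$, so $(\alpha,\beta)$ is extensible.

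The only genuinely delicate point — and the step I would be most careful about — is the bookkeeping around the companion element $\chi\in V$ in the definition of $(2,3)$-coboundary: in the general non-abelian equivalence there is no $\chi$ (the equivalence is governed purely by a linear $\varphi$), whereas the abelian cohomology $H^{(2,3)}(B,V)$ of \cite{Issa} is defined with a pseudoderivation-with-companion. I would handle this by noting that the $\nu$-equation \eqref{AEE2}, matched against the coboundary formula for $\nu$, forces the contribution of $\chi$ to appear only through the combination $(D(x,y)-\mu(x*_B y))(\chi)$, and that consistency of the $\nu$-part with \eqref{Iam5}$=$\eqref{AEE3} (the second half) pins this down; in fact one checks that one may take $\chi = 0$ without loss, or equivalently that the relevant coboundary subspace is generated by the $\varphi$-part alone once $(\alpha,\beta)\in C_{(B,V)}$. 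With that normalization settled, the equivalence of cocycles in Theorem \ref{Eth1} and "same cohomology class in $H^{(2,3)}(B,V)$" coincide, and the theorem follows verbatim from Theorem \ref{Eth1} specialized to the abelian case. I would close by remarking that this is why the statement is phrased "similar to Theorem \ref{Eth1}": the proof is the same argument with $*_V$ and $[\ ,\ ,\ ]_V$ set to zero and the compatibility constraint $C_{(B,V)}$ imposed to keep the twisted datum inside the fixed representation.
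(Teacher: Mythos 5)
Your overall route is the same as the paper's: the paper gives no separate argument for Theorem \ref{Wm6} (it is stated as following "similar to Theorem \ref{Eth1}"), and the intended proof is exactly the specialization you carry out, namely setting $a*_Vb=0$ and $[a,b,c]_V=0$ so that \eqref{Iam1}--\eqref{Iam5} collapse to \eqref{AEE1}--\eqref{AEE4} (with the $D$-compatibility \eqref{AEE4} indeed following from the $\theta$-compatibility via \eqref{eq2.9}), and so that $\mu_{(\alpha,\beta)}=\mu$, $\theta_{(\alpha,\beta)}=\theta$, $D_{(\alpha,\beta)}=D$ for $(\alpha,\beta)\in C_{(B,V)}$. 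The forward direction of your argument (extensible $\Rightarrow$ same class, via $\varphi=-f\alpha$ up to sign conventions) is fine, since a coboundary with vanishing companion is in particular a coboundary.

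The gap is exactly the point you flag and then dismiss: the companion $\chi$. What Theorems \ref{EC} and \ref{Eth1} give in the abelian case is that $(\alpha,\beta)$ is extensible if and only if $(\nu,\omega)_{(\alpha,\beta)}-(\nu,\omega)$ is a $(2,3)$-coboundary realized by a linear map alone, i.e.\ of the form $\delta(f,0)$; this is forced because any $\gamma$ inducing $(\alpha,\beta)$ must have the shape $\gamma(a+s(x))=\beta(a)-\varphi(x)+s\alpha(x)$, which leaves no room for a companion term. By contrast, "same class in $H^{(2,3)}(B,V)$" means the difference equals $\delta(f,\chi)$ for some pair, and the two notions differ in the $\nu$-component by the term $\bigl(D(x,y)-\mu(x*_By)\bigr)(\chi)$. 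Your assertion that "one may take $\chi=0$ without loss" is precisely the claim that $\delta(0,\chi)=\delta(g,0)$ for some linear $g$, i.e.\ that every such $\chi$ occurs as the companion of a pseudoderivation of $B$ into $V$ in the sense recalled in Section 2; the justification you offer (consistency with \eqref{AEE3}, which constrains only how $\beta$ intertwines $\theta$ and $\mu$) does not bear on $\chi$, and the claim is not automatic. So the "if" direction as written is not proved: what you have actually established is the theorem with "same cohomological class" replaced by "differ by a coboundary with vanishing companion." To close the gap you would need either an argument that the companion can always be removed in this situation, or a direct construction of $\gamma$ from a general pair $(f,\chi)$ --- note that the naive modification of $\gamma$ by adding $\chi$ is not linear. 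The paper glosses the same point by omitting the proof, but a self-contained argument must address it.
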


By Theorem \ref{Wm5} and Theorem \ref{Wm6}, we obtain the following result:

\begin{thm}
Consider an abelian extension of $B$ by $V$ given by $0\longrightarrow V\stackrel{i}{\longrightarrow}
\hat{B}\stackrel{p}{\longrightarrow}B\longrightarrow0$. Then we have the following Wells exact sequence:
$$0\longrightarrow H^{1}(B,V)\stackrel{\mathcal{I}\mathcal{S}^{-1}}{\longrightarrow} \mathrm{Aut}_{V}(\hat{B})\stackrel{\mathcal{K}}{\longrightarrow}C_{(B,V)}\stackrel{\mathcal{W}}{\longrightarrow} H^{(2,3)}(B,V).$$
\end{thm}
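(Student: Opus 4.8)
The plan is to assemble the final Wells exact sequence for abelian extensions by specializing the non-abelian machinery of Section~5 and then observing where the abelian hypothesis forces simplifications. Concretely, I would start from the exact sequence of Theorem~\ref{Wm5},
\begin{equation*}
0\longrightarrow Z_{nab}^{1}(B,V)\stackrel{\mathcal{I}\mathcal{S}^{-1}}{\longrightarrow} \mathrm{Aut}_{V}(\hat{B})\stackrel{\mathcal{K}}{\longrightarrow}\mathrm{Aut}(B)\times \mathrm{Aut}(V)\stackrel{\mathcal{W}}{\longrightarrow} H^{(2,3)}_{nab}(B,V),
\end{equation*}
and identify each term with its abelian counterpart. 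First, because the extension is abelian, the bracket and product on $V$ vanish, so the defining relations of $Z_{nab}^{1}(B,V)$ in \eqref{W5} collapse: the conditions $a*_V\varphi(x)=[a,\varphi(x),b]_V=[b,a,\varphi(x)]_V=0$ become vacuous, and the remaining two relations become exactly the cocycle conditions defining $Z^{1}(B,V)$ with respect to the representation $(V,\mu,\theta,D)$, i.e. $\mu(x)\varphi(y)-\mu(y)\varphi(x)=\varphi(x*_B y)$ and $\theta(x,z)\varphi(y)-\theta(y,z)\varphi(x)-D(x,y)\varphi(z)=-\varphi([x,y,z]_B)$, which is precisely a pseudoderivation with zero companion, hence $H^{1}(B,V)$. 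Thus $Z_{nab}^{1}(B,V)=H^{1}(B,V)$ in this case.

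Next I would address the target of $\mathcal{W}$. In the non-abelian setting $\mathcal{W}$ lands in $H^{(2,3)}_{nab}(B,V)$, but the abelian analogue of Theorem~\ref{Wm6} tells us that $(\nu,\omega)_{(\alpha,\beta)}$ is again a genuine $(2,3)$-cocycle (in the sense of Section~2) precisely when $(\alpha,\beta)$ is compatible, i.e. when $\beta\theta(x,y)a=\theta(\alpha x,\alpha y)\beta a$ and $\beta\mu(x)a=\mu(\alpha x)\beta a$; these are exactly the conditions cutting out $C_{(B,V)}$. Restricting the domain of $\mathcal{W}$ from $\mathrm{Aut}(B)\times\mathrm{Aut}(V)$ to $C_{(B,V)}$ therefore makes the difference $(\nu,\omega)_{(\alpha,\beta)}-(\nu,\omega)$ a well-defined class in the linear cohomology $H^{(2,3)}(B,V)$, and Theorem~\ref{Wm6} identifies its vanishing with extensibility. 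So the sequence becomes
\begin{equation*}
0\longrightarrow H^{1}(B,V)\stackrel{\mathcal{I}\mathcal{S}^{-1}}{\longrightarrow} \mathrm{Aut}_{V}(\hat{B})\stackrel{\mathcal{K}}{\longrightarrow}C_{(B,V)}\stackrel{\mathcal{W}}{\longrightarrow} H^{(2,3)}(B,V).
\end{equation*}

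Finally I would check exactness at each spot. Injectivity of $\mathcal{I}\mathcal{S}^{-1}$ and exactness at $\mathrm{Aut}_{V}(\hat{B})$ are inherited verbatim from Theorem~\ref{Wm4}/Proposition~\ref{Wm3}, once one notes that the image of $\mathcal{K}$ automatically lands in $C_{(B,V)}$: for $\gamma\in\mathrm{Aut}_V(\hat B)$ the naturality $\gamma i=i(\gamma|_V)$ together with formulas \eqref{C2}--\eqref{C4} forces $(\,p\gamma s,\gamma|_V)$ to satisfy the compatibility relations, so restricting the codomain to $C_{(B,V)}$ does not alter $\mathrm{Im}\,\mathcal{K}$. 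Exactness at $C_{(B,V)}$ is the content of Theorem~\ref{Wm6}: $(\alpha,\beta)\in\ker\mathcal{W}$ iff $(\nu,\omega)$ and $(\nu,\omega)_{(\alpha,\beta)}$ are cohomologous iff $(\alpha,\beta)$ is inducible iff $(\alpha,\beta)=\mathcal{K}(\gamma)$ for some $\gamma$. I expect the only genuinely delicate point—and hence the main obstacle—to be verifying that $\mathrm{Im}\,\mathcal{K}\subseteq C_{(B,V)}$ and that the restricted $\mathcal{W}$ really takes values in the \emph{linear} cohomology rather than only the non-abelian one; this amounts to re-examining the cocycle identities of Section~2 under the compatibility constraints, but it is routine given Theorem~\ref{Wm6}. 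Everything else is a direct transcription of Section~5 with the $V$-bracket and $V$-product set to zero.
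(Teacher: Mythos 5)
Your proposal is correct and follows essentially the same route as the paper, which obtains this theorem directly by specializing Theorem \ref{Wm5} to the abelian case and invoking Theorem \ref{Wm6} for exactness at $C_{(B,V)}$. You simply make explicit the details the paper leaves implicit (the collapse of $Z^{1}_{nab}(B,V)$ to pseudoderivations with zero companion, i.e.\ $H^{1}(B,V)$, and the fact that $\mathrm{Im}\,\mathcal{K}\subseteq C_{(B,V)}$ so that $\mathcal{W}$ lands in the linear cohomology), which is consistent with the paper's argument.
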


%%%%%%%%%%%%%%%%%%%%%%%%%%%%%%%%%%%%%%%%%%%%%%%%%%%%%%%%%%%%%%%%%%%%%%%%%%%%%%%%%%%%%%%%%%%%%%%%%%%%%%%%%%

\end{document}